\begin{document}
\fontsize{10.95}{14}\rm
\newcommand{\oper}[1]{\operatorname{#1}} 
\newcommand{\p}{\mathbb{P}^1}
\newcommand{\z}{\mathbb{Z}}
\newcommand{\xx}{\mathcal{X}}
\newcommand{\yy}{\mathcal{Y}}
\newcommand{\co}{\mathbb{C}}
\newcommand{\pp}{\mathscr{P}}
\newcommand{\f}{\mathbb{F}_p}
\newcommand{\fc}{\overline{\mathbb{F}_p}}
\newcommand{\iso}{\to^{\!\!\!\!\!\!\!\sim\,}}
\newcommand{\q}{\mathbb{Q}}
\newcommand{\tm}{((t^{\frac{1}{m}}))}
\newcommand{\zun}{\z_p^{\oper{un}}}
\newcommand{\qun}{\q_p^{\oper{un}}}
\newcommand{\kar}{\overline{K}}
\newcommand{\pa}{\mathfrak{p}}
\newcommand{\al}{\widetilde{\mathbb{Z}}^{\overline{\mathbb{Q}}}}
\newcommand{\lal}{\widetilde{\mathbb{Z}_p}^{\overline{\mathbb{Q}_p}}}
\title{Minimal Fields of Definition for Galois Action}
\author{Hilaf Hasson}
\date{September 9, 2015}
\begin{abstract}
Let $K$ be a field, let $G$ be a finite group, and let $\bar X\rightarrow \bar Y$ be a $G$-Galois branched cover of varieties over $K^{\oper{sep}}$.
Given a mere cover model $X\rightarrow Y$ of this cover over $K$, in Part I of this paper I observe that there is a unique minimal field $E$ over which
$X\rightarrow Y$ becomes Galois, and I prove that $E/K$ is Galois with group a subgroup of $\oper{Aut}(G)$. In Part II of this paper, by making the additional
assumption that $K$ is a field of definition (i.e., that there exists \it some \rm Galois model over $K$), I am able to give an explicit
description of the unique minimal field of Galois action for $X\rightarrow Y$. Namely, if there exists a $K$-rational point of $X$ above an unramified point $P\in Y(K)$
then $E$ is contained in the intersection of the specializations at $P$ in the various different $G$-Galois models of $\bar X\rightarrow \bar Y$ over $K$.
Using the same proof mechanism, I observe a reverse version of ``The Twisting Lemma'', which asserts that the behavior of the $K$-rational 
points on the various mere cover models over $K$, and the behavior of the specializations on the various $G$-Galois models 
over $K$, are all governed by a single equivalence 
relation (independent of the model) on the 
$K$-rational points of the base variety.
\\ \\
MSC classes:	14H30, 14G05, 11S20, 12F12
\end{abstract}
\maketitle
\theoremstyle{plain}
\newtheorem{thm}{Theorem}[section]
\newtheorem{rmk}[thm]{Remark}
\newtheorem{qst}[thm]{Question}
\newtheorem{prp}[thm]{Proposition}
\newtheorem{hyp}[thm]{Hypotheses}
\newtheorem{crl}[thm]{Corollary}
\newtheorem{cnv}[thm]{Convention}
\newtheorem{cnj}[thm]{Conjecture}
\newtheorem{stt}[thm]{Statement}
\newtheorem{lem}[thm]{Lemma}
\newtheorem{dff}[thm]{Definition}
\newtheorem{clm}[thm]{Claim}
\newtheorem{ntt}[thm]{Notation}
\newtheorem{example}[thm]{Example}
\section{Introduction}
The focus of this paper is the descent theory of algebraic covers, and more precisely of $G$-Galois branched covers. For a finite group 
$G$, a map of varieties (over a fixed field) is said to be a \it $G$-Galois branched cover \rm if it is finite, generically \'etale, and 
$G$ acts freely and transitively on its geometric fibers, away from the ramification.

Questions concerning the descent behavior of $G$-Galois 
branched covers  
arise naturally in Arithmetic Geometry and Galois Theory. For example, an easy corollary of Riemann's Existence Theorem (\cite{sgaone}, expos\'e XII) is
that for every finite group $G$ there exists a $G$-Galois branched cover over $\p_{\bar \q}$. By
Hilbert's Irreducibility Theorem (\cite{fj}, Chapter 11), if this cover descends (together with its group action) to a number field $K$, then $G$ is realizable as the Galois group of a field extension over $K$.

Every $G$-Galois branched cover has an associated field called the \it field of moduli \rm (see Definition \ref{E:fomfom}), 
which is the best candidate for being the unique 
minimal field of 
definition of the cover (together with its Galois action), if one exists. Questions 
regarding the field of moduli have been a vibrant area of research (see for example \cite{syb1}, \cite{raynaud} and \cite{vs}). 

The field of moduli is contained in every field of definition of the $G$-Galois branched cover. Furthermore, David Harbater and Kevin Coombes proved in \cite{ch}
that (assuming the $G$-Galois branched cover is given over $\bar \q$) it is equal to the intersection of all fields of definition. It is important to note that while the field of moduli
may not be a field of definition of the cover \it together with its Galois action\rm, it was proven in \cite{ch} that it must be a field of definition
of the cover as a mere cover. (For more about the distinction between ``field of definition as a $G$-Galois branched cover'' and ``field of definition as
a mere-cover'', see Definition \ref{E:models}.)

These two results motivate the question studied in this paper. Namely, let $K$ be a field, let $\bar X\rightarrow \bar Y$ be a $G$-Galois branched cover defined over $K^{\oper{sep}}$,
and let $X\rightarrow Y$ be a mere-cover model (i.e., the model is not required to be Galois) of it over $K$. We ask the question: what can be said
about the minimal fields (or, as it turns out, field) $L$ that contain $K$ such that $X\times_KL\rightarrow Y\times_KL$ is Galois? In view of 
the results in \cite{ch} cited above, the answer to this
question informs our understanding of the relationship between the field of moduli of the $G$-Galois branched cover, and its minimal fields of definition.
(For further discussion see Remark \ref{E:explanation}.)

In the situation above, let $E$ be the intersection of all minimal fields of Galois action for the mere-cover model $X\rightarrow Y$. 
In Part I of this paper (specifically, Theorem \ref{E:tech}), I observe that $E$ is itself a field of Galois action of $X\rightarrow Y$, and I prove that $E/K$ is Galois with automorphism group a subgroup of $\oper{Aut}(G)$.
As a corollary (Corollary \ref{E:thebigone}) of this result, we see that the field of moduli as a $G$-Galois branched cover is Galois over the field of moduli as a mere-cover,
with group a subquotient of $\oper{Aut}(G)$.

Part II of this paper 
continues to explore the unique minimal field of Galois action $E$ of a particular mere cover model $X\rightarrow Y$ of $\bar X\rightarrow \bar Y$, but under the additional assumption that $K$ is a
field of definition (together with the Galois action) of $\bar X\rightarrow \bar Y$. (In other words, in addition to the assumptions of Part I,
we assume that there exists \it some \rm $G$-Galois model of $\bar X\rightarrow \bar Y$ over $K$.)
Under this additional assumption, we are able to deduce a lot more about $E$. Namely,
Theorem \ref{E:crux} says that if
there's a $K$-rational point of $X$ above an unramified point $P\in Y(K)$, then $E$ is contained in the the intersection of the fields induced by 
specializing at $P$ in
all of the different $G$-Galois models of $\bar X\rightarrow \bar Y$ over $K$. (If the fiber over $P$ is disconnected then by ``the field induced by specializing at $P$''
I mean the Galois closure of any of its connected components; see Remark \ref{E:rmkofspec} for further details.)

The same mechanism that proves the main result of Part II also gives a result (Theorem \ref{E:specialization}) 
that describes the 
behavior of the $K$-rational points on the various mere cover models over $K$, as well as the behavior of the specializations of 
the various $G$-Galois models over $K$, by a single equivalence relation 
(independent of the model) on the $K$-rational 
points of the base variety. One can view this result as a reverse version of ``The Twisting Lemma'' (Lemma \ref{E:twistinglemma}; see also \cite{debesgrunwald}).

While Parts I and II of this paper explore the minimal fields of Galois action of a given mere cover model, and therefore inform our understanding of minimal fields
of definition over the field of moduli (see Remark \ref{E:explanation}), in the appendix we stregthen a result that appeared in \cite{ch}, and 
construct a special field of definition
(infinite over the field of moduli) for every $G$-Galois branched cover. As a
corollary of this result, we prove that for every finite group $G$ there exists an extension of number fields $\q\subset E\subset F$ such that $F/E$
is $G$-Galois, and $E/\q$ ramifies only over those primes that divide $|G|$.
\section{Definitions and Notations}\label{E:sectdefinitions}
\begin{ntt}\rm
 Let $X$ be an integral scheme. We will use the notation $\kappa(X)$ to denote the function field of $X$.
\end{ntt}
\begin{ntt}\rm
 For every field $K$ we will use the notation $K^{\oper{sep}}$ to denote its separable closure.
\end{ntt}
\begin{dff} \label{E:defofgalois}\rm
Let $X$ and $Y$ be integral schemes. Assume that $X$ is normal, and $Y$ is regular. We say that a map $X \rightarrow Y$ is a \it branched
cover \rm (or simply a cover) if the map is finite and generically \'etale. Let $B$ be the codimension $1$ reduced induced subscheme of $Y$ 
made up of the branching locus of this map. (We will henceforth use the term \it the branch divisor \rm to refer to this construction.) We 
say 
that the
branched cover is \it Galois \rm if $\oper{Aut}(X/Y)$ acts freely and transitively on the geometric fibers over $Y\smallsetminus 
B$. 
We
sometimes refer to branched covers as
\it mere covers\rm.

Let $G$ be a finite group. A \it $G$-Galois branched cover \rm is a pair $(X\rightarrow Y,\Phi)$, where $X\rightarrow Y$ is a 
branched cover that is Galois, and $\Phi$ is an isomorphism from $\oper{Gal}(\kappa(X)/\kappa(Y))$ 
to $G$. (We will sometimes abuse notation and say that $X\rightarrow Y$ is a $G$-Galois branched cover without mentioning its associated 
isomorphism.)

Let $X\rightarrow Y$ and $X'\rightarrow Y$ be two mere covers of $Y$. We say that they 
are \it isomorphic as mere covers \rm if there exists an isomorphism $\eta:X\rightarrow X'$ that makes the following commute:
$$\begindc{\commdiag}[450]
\obj(0,1)[do]{$X$}
\obj(1,0)[dt]{$Y$}
\obj(2,1)[df]{$X'$}
\mor{do}{dt}{}
\mor{do}{df}{$\eta$}
\mor{df}{dt}{}
\enddc.$$

If the two mere covers are $G$-Galois branched covers, then we say that they are \it isomorphic as $G$-Galois branched covers \rm if there 
exists an isomorphism $\eta$ 
as above that agrees with the given isomorphisms of $\oper{Gal}(\kappa(X)/\kappa(Y))$ and $\oper{Gal}(\kappa(X')/\kappa(Y))$ with 
$G$.

 Let $L$ and $K$ be fields, and let $G$ be a finite group. If $\oper{Spec}(L)\rightarrow \oper{Spec}(K)$ is a mere cover (resp. $G$-Galois 
cover), then we say that $L/K$ is a \it mere extension of fields \rm (resp. a \it $G$-Galois extension of fields\rm ).
\end{dff}

Both the hypotheses of Part I (Hypotheses \ref{E:weaaak}) and the hypotheses of Part II (Hypotheses \ref{E:hyp1}) will be based on the 
following setting.
\begin{hyp}\label{E:superweaaak}\rm
 Let $K$ be a field, let $G$ be a finite group, and let $(\bar X\rightarrow \bar Y, \Phi)$ be a
$G$-Galois branched cover of normal, geometrically irreducible
varieties over $K^{\oper{sep}}$, where $\bar Y$ is 
smooth. Assume  
$\bar Y$ descends to $K$, and let $Y$ be a $K$-model of it. (Capital letters with a bar over them will consistently denote varieties over separably closed fields.)
\end{hyp}
The remainder of this section will refer to the situation of Hypotheses \ref{E:superweaaak}.
\begin{dff}\label{E:models}\rm
 Let $E$ be a subfield of $K^{\oper{sep}}$ that contains $K$. Then we say that $E$ is a \it field of definition of 
$\bar X\rightarrow \bar Y$ as a mere cover \rm if it 
descends to a map of
$E$-varieties $X\rightarrow Y\times_KE(=:Y_E)$. (Any such $X\rightarrow Y_E$ is called an $E$-model, or a mere cover model over $E$, 
of $\bar X\rightarrow \bar Y$.) We say
that $E$ is a \it field of definition as a $G$-Galois branched cover \rm if $\bar X \rightarrow \bar Y$ has an 
$E$-model that is
Galois. (Any such $E$-model is called a $G$-Galois model of $\bar X \rightarrow \bar Y$ over $E$.)
\end{dff}
\begin{ntt}\rm
Let $X\rightarrow Y_E$ be a $G$-Galois model of $(\bar X\rightarrow \bar Y,\Phi)$. Note 
 that $\oper{Gal}(\kappa(X)/\kappa(Y_E))$ is naturally isomorphic to $\oper{Gal}(\kappa(\bar X)/\kappa(\bar Y))$, and 
that 
therefore 
$X\rightarrow Y_E$ comes equipped with the structure of a $G$-Galois branched cover. In order to simplify notation we will 
denote 
the induced isomorphism from $\oper{Gal}(\kappa(X)/\kappa(Y_E))$ to $G$ also by $\Phi$.
\end{ntt}
\begin{dff}\label{E:fomfom}\rm
The \it field of moduli \rm of $\bar X\rightarrow\bar Y$ as a $G$-Galois branched (resp. mere cover) is the subfield 
of $K^{\oper{sep}}$ fixed under the subgroup of $\oper{Gal}(K^{\oper{sep}}/K)$ made up of the automorphisms $\sigma$ that take 
$\bar X \rightarrow \bar Y$ to an isomorphic copy 
of itself as a $G$-Galois branched cover (resp. mere cover):
$$\begindc{\commdiag}[450]
\obj(0,1)[do]{$\bar X^{\sigma}$}
\obj(1,0)[dt]{$Y\times_K K^{\oper{sep}}(=\bar Y)$}
\obj(2,1)[df]{$\bar X$}
\mor{do}{dt}{}
\mor{do}{df}{$\sim$}
\mor{df}{dt}{}
\enddc.$$
\end{dff}
\section{Part I - $K$ is a Field of Definition as a Mere Cover}\label{E:sectweak}
Throughout Part I we will often refer back to the following set of hypotheses.
\begin{hyp}\label{E:weaaak}\rm
 The same as Hypotheses \ref{E:superweaaak}, with the extra assumption that $K$ is a field of definition of $\bar X\rightarrow 
\bar Y$ as a mere cover.
\end{hyp}
\begin{thm} \label{E:tech}
In the situation of Hypotheses \ref{E:weaaak}, let
$X\rightarrow
Y$ be a mere cover model of $\bar X\rightarrow \bar Y$ over $K$, and let $E$ be the intersection of all
of the overfields $L$ of $K$ such that the base change $X\times_K L \rightarrow Y\times_KL$ is Galois. Then the following hold:
\begin{enumerate}
\item The base change of $X\rightarrow Y$ to $E$ is already Galois.
\item The field extension $E/K$ is Galois, with Galois group isomorphic to a
subgroup $H$ of $\oper{Aut}(G)$.
\item The field extension $\kappa(X_E)/\kappa(Y)$ is Galois, with group isomorphic to $G\rtimes H$ 
(where \\$\oper{Gal}(\kappa(X_E)/\kappa(Y_E))\cong G$ is the
obvious subgroup of $G\rtimes H$, and where the action of $H$ on $G$ is
given by the embedding of $H$ in $\oper{Aut}(G)$).
\item $E$ is the intersection of $K^{\oper{sep}}$ with the Galois closure of $\kappa(X)/\kappa(Y)$.
\end{enumerate}
\end{thm}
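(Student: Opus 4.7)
The plan is to encode everything via Galois theory of function fields. Let $F=\kappa(Y)$, $F'=\kappa(X)$, $\bar F=\kappa(\bar Y)$, $\bar F'=\kappa(\bar X)$, viewed as subfields of an algebraic closure of $F$ with $\bar F=F\cdot K^{\oper{sep}}$ and $\bar F'=F'\cdot K^{\oper{sep}}$. Since $\bar X,\bar Y$ are geometrically irreducible and descend to $K$, one has $F\cap K^{\oper{sep}}=K=F'\cap K^{\oper{sep}}$, so both $\bar F/F$ and $\bar F'/F'$ are Galois with group $\oper{Gal}(K^{\oper{sep}}/K)$, while $\bar F'/\bar F$ is $G$-Galois by hypothesis.

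The main structural step is to show that $\bar F'/F$ is itself Galois and fits into a canonical split short exact sequence. Normality of $\bar F'/F$ follows from the identification $\bar F'\cong\kappa(X)\otimes_K K^{\oper{sep}}$: each $\sigma\in\oper{Gal}(K^{\oper{sep}}/K)$ lifts to the $F$-automorphism $1\otimes\sigma$, and combining this with the normality of $\bar F'/\bar F$ shows every $F$-embedding of $\bar F'$ into an algebraic closure lands inside $\bar F'$. One then has
\[
1\longrightarrow G\longrightarrow\oper{Gal}(\bar F'/F)\longrightarrow\oper{Gal}(K^{\oper{sep}}/K)\longrightarrow 1,
\]
split by $H_0:=\oper{Gal}(\bar F'/F')$, which maps isomorphically onto $\oper{Gal}(K^{\oper{sep}}/K)$ (surjectively because its fixed field in $K^{\oper{sep}}$ is $F'\cap K^{\oper{sep}}=K$, and with trivial intersection with $G$ because $F'\cdot\bar F=\bar F'$). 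This identifies $\oper{Gal}(\bar F'/F)\cong G\rtimes H_0$ for a canonical action $\rho:\oper{Gal}(K^{\oper{sep}}/K)\to\oper{Aut}(G)$, which depends on the chosen model $X$.

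With this in place, the rest is bookkeeping in the semidirect product. For any $K\subseteq L\subseteq K^{\oper{sep}}$, Galois theory identifies the subgroups of $G\rtimes H_0$ corresponding to $FL$ and $F'L$ as $G\rtimes\oper{Gal}(K^{\oper{sep}}/L)$ and $\oper{Gal}(K^{\oper{sep}}/L)\subseteq H_0$, respectively. A direct conjugation calculation shows the latter is normal in the former if and only if $\oper{Gal}(K^{\oper{sep}}/L)\subseteq\ker\rho$, i.e., if and only if $L$ contains $E':=(K^{\oper{sep}})^{\ker\rho}$. Thus the fields of Galois action for $X\to Y$ are exactly $\{L:E'\subseteq L\subseteq K^{\oper{sep}}\}$, which proves (1) and $E=E'$; for (2), $E'/K$ is Galois with group $H:=\oper{Im}(\rho)\subseteq\oper{Aut}(G)$.

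For (3), $\kappa(X_E)=F'E'$ corresponds to the normal subgroup $\ker\rho\subseteq G\rtimes H_0$, so $\kappa(X_E)/\kappa(Y)$ is Galois with group $(G\rtimes H_0)/\ker\rho=G\rtimes H$, and the subgroup fixing $\kappa(Y_E)$ sits as $G\rtimes\{1\}$. For (4), the Galois closure of $F'/F$ inside $\bar F'$ is the fixed field of the largest normal subgroup of $G\rtimes H_0$ contained in $H_0$; a short check identifies this subgroup as $\ker\rho$, so its intersection with $K^{\oper{sep}}$ is exactly $E'=E$. The chief obstacle in executing this plan is the opening structural step: once the canonical action $\rho$ has been extracted from the split extension describing $\bar F'/F$, all four claims reduce to transparent normality computations in $G\rtimes H_0$.
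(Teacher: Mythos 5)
Your proposal is correct and takes essentially the same route as the paper's proof: the split exact sequence $1\to G\to\oper{Gal}(\kappa(\bar X)/\kappa(Y))\to\oper{Gal}(K^{\oper{sep}}/K)\to 1$ with complement $\oper{Gal}(\kappa(\bar X)/\kappa(X))$, Galois action of a base-changed model detected by triviality of the conjugation action on $G$, and $E$ cut out by $\ker\rho$, which is precisely the paper's subgroup $V$ (the intersection of the section's image with the centralizer of $G$), with (3) and (4) matching the semidirect-quotient and normal-core computations. Your in-line conjugation checks simply replace Lemmas \ref{E:becky}, \ref{E:modelssections} and \ref{E:group}, and your tensor-product lifting argument for normality of $\kappa(\bar X)/\kappa(Y)$ replaces the paper's field-of-moduli citation.
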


It is well known, by Grothendieck's theory of faithfully flat descent (\cite{grdesc}), 
that the set of mere cover models over $K$ of $\bar X\rightarrow \bar Y$ that lie over $Y$, up to isomorphism of mere cover models, is in bijection with the set $H^1(\oper{Gal}(K^{\oper{sep}}/K),G)$.
In order to prove Theorem \ref{E:tech}, we will look instead at elements in $Z^1(\oper{Gal}(K^{\oper{sep}}/K),G)$, which can be identified with the sections of a certain short exact sequence.
The key to proving Theorem \ref{E:tech} is to study the relationship between properties of elements in $Z^1(\oper{Gal}(K^{\oper{sep}}/K),G)$
and their respective mere cover models (Lemma \ref{E:modelssections}). We proceed now to make the above identifications explicit.

In the situation of Hypotheses \ref{E:weaaak}, we have following diagram of fields:
$$\begindc{\commdiag}[250]
\obj(0,3)[do]{$\kappa(\bar Y)$}
\obj(0,1)[dt]{$K^{\oper{sep}}$}
\obj(3,2)[df]{$\kappa(Y)$}
\obj(3,0)[zz]{$K$}
\obj(0,5)[za]{$\kappa(\bar X)$}
\mor{do}{dt}{}[\atright,\solidline]
\mor{do}{df}{}[\atright,\solidline]
\mor{df}{zz}{}[\atright,\solidline]
\mor{zz}{dt}{}[\atright,\solidline]
\mor{do}{za}{$G$}[\atleft,\solidline]
\mor{za}{df}{}[\atright,\solidline]
\enddc$$

Since we assumed that $K$ is a field of definition as a mere cover, it is in particular the field of moduli as mere cover, 
which immediately implies that $\kappa(\bar X)$ is
Galois over $\kappa(Y)$. (Compare with Lemma 2.4 in 
\cite{syb2}; see also \cite{matzat}.)

We, therefore, have a short exact sequence:
$$1\rightarrow G \rightarrow \oper{Gal}(\kappa(\bar X)/\kappa(Y))\xrightarrow[]{f} \oper{Gal}(K^{\oper{sep}}/K)\rightarrow 1$$

The set of sections of $f$ is in bijection with $Z^1(\oper{Gal}(K^{\oper{sep}}/K),G)$, and therefore each section
induces a mere cover model. To be explicit, let $s$ be a section of $f$, and let $F$ be the subfield of $\kappa(\bar X)$ fixed by $s(\oper{Gal}(K^{\oper{sep}}/K))$.
Since $G\cap s(\oper{Gal}(K^{\oper{sep}}/K))=1$, the compositum $F\cdot \kappa(\bar Y)$ is equal to $\kappa(\bar X)$.
Furthermore, $K$ is algebraically closed in $F$.
(In order to see this, note that it is straightforward to see that the
natural map $G\rightarrow\oper{Gal}(\kappa(\bar X)/\kappa(Y))/s(\oper{Gal}(K^{\oper{sep}}/K))$ is a bijection of 
sets. Therefore the 
field
$F$ has 
degree $|G|$ over $\kappa(Y)$. Since $F\cdot \kappa(\bar Y)=\kappa(\bar X)$, the field $\kappa(\bar Y)$ is linearly disjoint from 
$F$ over $\kappa(Y)$.) Therefore the normalization $X$ of $Y$ in $F$ gives a mere-cover model $X\rightarrow Y$ of $\bar X\rightarrow \bar Y$ over $K$.

It should be noted that different sections of $f$ may induce isomorphic mere-cover models. Therefore, we may wish to consider a slightly more coarse category.
\begin{lem}\label{E:becky}
In the situation of Hypotheses \ref{E:weaaak}, let $$\Omega:=\{F|\kappa(Y)\subset F\subset\kappa(\bar X), \mbox{\,such that\,\,\,}F\cdot\kappa(\bar Y)=\kappa(\bar X) \mbox{,\,and\,}K \mbox{\,is algebraically closed  in\,} F\},$$
and let $\Psi:\oper{Sec}(f)\rightarrow \Omega$ be the map taking a section $s$ to the fixed subfield of $\kappa(\bar Y)$ under $s(\oper{Gal}(K^{\oper{sep}}/K))$. Then $\Psi$ is a bijection.
\end{lem}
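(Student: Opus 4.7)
The statement is pure Galois theory for the (infinite) extension $\kappa(\bar X)/\kappa(Y)$. Set $\tilde G := \oper{Gal}(\kappa(\bar X)/\kappa(Y))$, so the short exact sequence reads $1 \to G \to \tilde G \xrightarrow{f} \oper{Gal}(K^{\oper{sep}}/K) \to 1$ with $G = \oper{Gal}(\kappa(\bar X)/\kappa(\bar Y))$. The strategy is to prove injectivity and surjectivity of $\Psi$ separately by passing, for a section $s$, to the closed subgroup $H := s(\oper{Gal}(K^{\oper{sep}}/K))$ and then back to the fixed field $F := \kappa(\bar X)^H$ via the (infinite) Galois correspondence.

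Injectivity is essentially immediate: if $\Psi(s_1) = \Psi(s_2) = F$, then the Galois correspondence forces both $s_1(\oper{Gal}(K^{\oper{sep}}/K))$ and $s_2(\oper{Gal}(K^{\oper{sep}}/K))$ to equal the common subgroup $H = \oper{Gal}(\kappa(\bar X)/F)$. But a section is determined by its image (explicitly, $s_i = (f|_H)^{-1}$), so $s_1 = s_2$.

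For surjectivity, given $F \in \Omega$ I would set $H := \oper{Gal}(\kappa(\bar X)/F)$ and check that $f|_H$ is an isomorphism onto $\oper{Gal}(K^{\oper{sep}}/K)$; the desired section is then $s := (f|_H)^{-1}$, and $\Psi(s) = F$ by construction. The map $f|_H$ is injective iff $H \cap G = \{1\}$, which by the Galois correspondence (compositum of fixed fields corresponds to intersection of subgroups, and $G$ has fixed field $\kappa(\bar Y)$) is precisely the first condition $F \cdot \kappa(\bar Y) = \kappa(\bar X)$ defining $\Omega$. The map is surjective iff $H \cdot G = \tilde G$, which corresponds to $F \cap \kappa(\bar Y) = \kappa(Y)$. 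Here I would use that $Y$ is geometrically irreducible over $K$ (inherited from $\bar Y$, since geometric irreducibility is stable under base change), so the restriction $\oper{Gal}(\kappa(\bar Y)/\kappa(Y)) \to \oper{Gal}(K^{\oper{sep}}/K)$ is an isomorphism; intermediate fields of $\kappa(\bar Y)/\kappa(Y)$ are therefore exactly the composita $L' \cdot \kappa(Y)$ for $K \subseteq L' \subseteq K^{\oper{sep}}$, giving $F \cap \kappa(\bar Y) = (F \cap K^{\oper{sep}}) \cdot \kappa(Y)$. The second defining property of $\Omega$, namely ``$K$ is algebraically closed in $F$,'' immediately yields $F \cap K^{\oper{sep}} = K$, and hence $F \cap \kappa(\bar Y) = \kappa(Y)$ as required.

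I do not anticipate a serious obstacle: the entire argument is bookkeeping with the Galois correspondence, the short exact sequence, and the observation that sections of $f$ are the same data as complements of $G$ in $\tilde G$. The only moderately delicate point is the identification of intermediate fields of $\kappa(\bar Y)/\kappa(Y)$ with subextensions of $K^{\oper{sep}}/K$, which is supplied by the geometric irreducibility of $Y$.
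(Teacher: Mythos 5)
Your proof is correct and is precisely the standard Galois-correspondence argument that the paper itself does not spell out (its proof just cites Lemma 2.2.5 of \cite{beckthesis} as a ``standard argument''): identifying sections with closed complements $H$ of $G$ via $s=(f|_H)^{-1}$, and translating $H\cap G=1$ and $HG=\oper{Gal}(\kappa(\bar X)/\kappa(Y))$ into the two conditions $F\cdot\kappa(\bar Y)=\kappa(\bar X)$ and $F\cap\kappa(\bar Y)=\kappa(Y)$ defining $\Omega$, the latter using geometric irreducibility of $Y$ over $K$. The only points to make explicit are topological bookkeeping (sections are continuous, so $s(\oper{Gal}(K^{\oper{sep}}/K))$ and $HG$ are compact, hence closed, and $(f|_H)^{-1}$ is automatically continuous), which do not affect the substance of your argument.
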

\begin{proof}
 This is a standard argument, and the proof
is a straightforward generalization of Lemma 2.2.5 in \cite{beckthesis} (where this correspondence is stated in the particular
case that $Y$ is the projective line over a number field).
\end{proof}

The following lemma gives a property for sections of $f$ that corresponds to a mere-cover model being Galois.

\begin{lem} \label{E:modelssections}
 In the situation of Lemma \ref{E:becky}, let $X\rightarrow
Y$ be a mere cover model of $\bar X\rightarrow \bar Y$ over $K$. Then the following are equivalent:
\begin{enumerate}
 \item $X\rightarrow Y$ is Galois.
 \item There exists a $\kappa(Y)$-embedding $\rho$ of $\kappa(X)$ into $\kappa(\bar X)$ such that the 
image of
$\Psi^{-1}(\rho(\kappa(X)))$ commutes
with $G$.
 \item For every $\kappa(Y)$-embedding $\rho$ of $\kappa(X)$ into $\kappa(\bar X)$, the 
image of
$\Psi^{-1}(\rho(\kappa(X)))$ commutes
with $G$.
\end{enumerate}
\end{lem}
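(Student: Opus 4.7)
The plan is to translate the geometric statement ``$X\to Y$ is Galois'' into a group-theoretic condition inside $\oper{Gal}(\kappa(\bar X)/\kappa(Y))$ via the Galois correspondence, and then observe that this condition collapses to the commuting statement because the short exact sequence is split by $s$. Fix a $\kappa(Y)$-embedding $\rho$. Since $X\rightarrow Y$ is a mere cover model, $F:=\rho(\kappa(X))$ lies in $\Omega$ (it contains $\kappa(Y)$, its compositum with $\kappa(\bar Y)$ is $\kappa(\bar X)$, and $K$ is algebraically closed in it), so $s:=\Psi^{-1}(F)$ is a well-defined section, and by the construction of $\Psi$ its image $H:=s(\oper{Gal}(K^{\oper{sep}}/K))$ is precisely $\oper{Gal}(\kappa(\bar X)/F)$. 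The branched cover $X\rightarrow Y$ is Galois if and only if $\kappa(X)/\kappa(Y)$ is, equivalently $F/\kappa(Y)$ is, equivalently $H$ is normal in $\oper{Gal}(\kappa(\bar X)/\kappa(Y))$.

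The key group-theoretic computation is that ``$H$ normal'' is equivalent to ``$H$ commutes with $G$'' under our specific setup. Indeed, since $s$ is a section, $G\cap H=\{1\}$ and $\oper{Gal}(\kappa(\bar X)/\kappa(Y))=G\cdot H$. Thus $H$ is normal in the whole group iff $gHg^{-1}=H$ for every $g\in G$. Given $g\in G$ and $h\in H$, the element $ghg^{-1}h^{-1}$ lies in $G$ (because $G$ is normal) and would lie in $H$ as soon as $ghg^{-1}\in H$; the intersection being trivial forces $ghg^{-1}=h$. Hence normality of $H$ is the same as elementwise commutativity between $G$ and $H$, which gives the equivalence (1)$\Leftrightarrow$(3) immediately.

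Finally, (3)$\Rightarrow$(2) is trivial (any $\rho$ exists since $\kappa(X)\hookrightarrow\kappa(\bar X)$ over $\kappa(Y)$), and for (2)$\Rightarrow$(1) I would note that two $\kappa(Y)$-embeddings $\rho,\rho'$ differ by some $\tau\in\oper{Gal}(\kappa(\bar X)/\kappa(Y))$, so the resulting subgroups $H,H'$ are $\oper{Gal}(\kappa(\bar X)/\kappa(Y))$-conjugates; since a subgroup is normal iff each of its conjugates is, the commuting-with-$G$ condition is independent of the choice of $\rho$, so (2)$\Rightarrow$(3)$\Rightarrow$(1).

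I do not expect a real obstacle here: all the content is in the fact that the sequence splits (so $G\cap H=1$) and in the Galois correspondence applied inside the Galois extension $\kappa(\bar X)/\kappa(Y)$ (which is Galois by the ``field of moduli as mere cover'' observation quoted just before Lemma \ref{E:becky}). The only point requiring mild care is verifying that $\rho(\kappa(X))\in\Omega$ so that $\Psi^{-1}$ applies, but this is immediate from the mere-cover model structure on $X\rightarrow Y$.
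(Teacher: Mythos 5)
Your proof is correct and follows essentially the same route as the paper: identify Galois-ness of $X\rightarrow Y$ with normality of $s(\oper{Gal}(K^{\oper{sep}}/K))$ in $\oper{Gal}(\kappa(\bar X)/\kappa(Y))$ via the correspondence of Lemma \ref{E:becky}, and then use the split structure ($G\cap H=1$, $GH$ the whole group) to convert normality into elementwise commutation with $G$. Your explicit commutator computation and the conjugacy-of-embeddings remark for independence of $\rho$ are just slightly more detailed versions of steps the paper states tersely.
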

\begin{proof}
Let $\rho$ be a $\kappa(Y)$-embedding of $\kappa(X)$ into $\kappa(\bar X)$. Let $s=\Psi^{-1}(\rho(\kappa(X)))$. It suffices to show that the image of
$s$ commutes
with $G$ if and only if $X\rightarrow Y$ is Galois. By definition $X\rightarrow Y$ is Galois if and only if $\kappa(X)/\kappa(Y)$
is a Galois extension of fields, which, in turn, holds if and only if $\rho(\kappa(X))/\kappa(Y)$ is a Galois extension.
By Lemma \ref{E:becky}, $\rho(\kappa(X))$ is the fixed subfield of $\kappa(\bar Y)$ under the image of $s$.
Therefore, by Galois Theory, the extension $\rho(\kappa(X))/\kappa(Y)$ is Galois exactly when the image of $s$ is normal
in $\oper{Gal}(\kappa(\bar X)/\kappa(Y))$. Since $\oper{Gal}(\kappa(\bar X)/\kappa(Y))$ is the semi-direct product
of $G$ with the image of $s$, this condition is equivalent to the image of $s$
commuting with $G$.
\end{proof}

In order
to prove Theorem \ref{E:tech} we require a group-theoretic lemma (Lemma \ref{E:group}).
\begin{ntt}
\rm Let $g$ and $h$ be elements in a group $G$. We use the notation $^hg$ to mean the conjugation $hgh^{-1}$.
\end{ntt}
\begin{lem} \label{E:group}
 Let $J$ and $M$ be groups, and let $I$ be a semi-direct product
$J\rtimes M$. Let $N$
be $M\cap C_I(J)$, where $C_I(J)$ is the centralizer of $J$ in $I$. Then the following hold:
\begin{enumerate}
 \item $N$ is normal in $I$.
 \item Let $\gamma:M/N\rightarrow \oper{Aut}(J)$ be defined by taking $mN$
to the automorphism $j\mapsto \,^mj$. Then $\gamma$ is well defined and injective.
 \item $I/N$ is isomorphic to  the semi-direct product $J\rtimes_{\gamma}
(M/N)$.
\end{enumerate}
\end{lem}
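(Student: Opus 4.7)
The plan is to exploit the fact that membership in $N$ is a strong condition: any $n\in N$ not only lies in $M$ but genuinely centralizes $J$ inside $I$, and in a semi-direct product this means $nj=jn$ (not just $nj=({}^n j)n$) for every $j\in J$. With this observation in hand, the three claims reduce to bookkeeping with the normal form $jm$ ($j\in J$, $m\in M$) of elements of $I$.

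For part (1), I would write an arbitrary element of $I$ uniquely as $jm$ and compute $(jm)n(jm)^{-1}$ for $n\in N$ in two stages. First I would check that $m' := mnm^{-1}\in N$: it lies in $M$ since $M$ is a subgroup, and it centralizes $J$ because for any $j_0\in J$ one has $m^{-1}j_0 m ={}^{m^{-1}}j_0\in J$, which is fixed by conjugation by $n$, so $m'j_0(m')^{-1}=m(m^{-1}j_0 m)m^{-1}=j_0$. Second, since $m'\in N$ centralizes $J$, we get $jm'j^{-1}=m'$, so the full conjugate $(jm)n(jm)^{-1}=jm'j^{-1}=m'\in N$.

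For part (2), the map $mN\mapsto (j\mapsto {}^m j)$ is well-defined precisely because $m,m'$ defining the same coset differ by an element of $N$, which acts trivially on $J$ by conjugation; it is a homomorphism because ${}^{mm'}j={}^m({}^{m'}j)$; and its kernel is exactly the set of cosets $mN$ with $m$ centralizing $J$, i.e.\ just the identity coset, giving injectivity. For part (3), I would define $\varphi:I\to J\rtimes_\gamma (M/N)$ by $jm\mapsto (j,mN)$ and verify directly from the multiplication rule $(jm)(j'm')=(j\cdot {}^m j')(mm')$ on the left and $(j,mN)(j',m'N)=(j\cdot\gamma(mN)(j'),\,mm'N)$ on the right that $\varphi$ is a surjective homomorphism. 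Its kernel is $\{jm:j=1,\,m\in N\}=N$, so the first isomorphism theorem concludes.

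There is no real obstacle here; the lemma is purely formal manipulation with semi-direct products. The only point that requires a moment of care is the stability of $N$ under $M$-conjugation used in part (1), and this is immediate once one notes that centralizing $J$ is preserved under pulling the conjugating element $m$ across elements of $J$ via the semi-direct product relation.
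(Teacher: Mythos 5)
Your proof is correct and follows essentially the same route as the paper: in (1) both arguments conjugate $n$ first by $m$ (checking $mnm^{-1}\in N$, which you verify directly where the paper cites normality of $C_I(J)$ in $I$) and then observe the $J$-conjugation is trivial since $N$ centralizes $J$; parts (2) and (3) coincide with the paper's argument, with your kernel computation for $jm\mapsto(j,mN)$ just spelling out what the paper calls an easy verification.
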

\begin{proof}
 Since $J$ is normal in $I$, it follows that so is $C_I(J)$. Therefore $N$ is normal in $M$. In order to
show that $N$ is
normal in $I$ it suffices to prove for every $n$ in $N$, $j$ in $J$,
and $m$ in $M$ the element $^{jm}n$ is in $N$. Since $N$
is normal in $M$, the element $^mn$ is in $N$. Since $J$
commutes with $N$ it follows that $^{jm}n=\,^j(^mn)=\,^mn$. It is now clear
that $^{jm}n$ is in $N$, and therefore (1) is proven.

The homomorphism $\gamma$ is well defined because $N$ commutes with $J$. It
remains to show that $\gamma$ is injective. Indeed if
$\gamma(mN)=id$ then for every $j\in J$, we have
$^mj=j$. Therefore $m$ commutes with $J$. Since
$m$ is also in $M$, we conclude that it is in $N$. Therefore
$mN=N$. This proves (2).

It is now an easy verification that the map $I=J\rtimes M\rightarrow
J\rtimes_{\gamma}(M/N)$ taking $jm$, where $j\in J$ and $m\in M$, to $(j,mN)$ is a well-defined homomorphism with kernel $N$, proving (3).
\end{proof}
We are now ready to prove Theorem \ref{E:tech}:
\begin{proof} (Theorem \ref{E:tech})
Let $\rho$ be a $\kappa(Y)$-embedding of $\kappa(X)$ into $\kappa(\bar X)$, and let $s=\Psi^{-1}(\rho(\kappa(X)))$ be
the corresponding section of $f$.

Let $V$ be the intersection of
$s(\oper{Gal}(K^{\oper{sep}}/K))=\oper{Gal}(\kappa(\bar X)/\rho(\kappa(X)))$ with
the centralizer of $G$ in $\oper{Gal}(\kappa(\bar X)/\kappa(Y))$. 
Applying Lemma \ref{E:group} with $G$ in the role of $J$, $s(\oper{Gal}(K^{\oper{sep}}/K))$
in the role of $M$, $V$ in the role of $N$, and
$\oper{Gal}(\kappa(\bar X)/\kappa(Y))$ in the role of $I$, we see that $V$ is normal in 
$\oper{Gal}(\kappa(\bar X)/\kappa(Y))$, and that
$\oper{Gal}(\kappa(\bar X)/\kappa(Y))/V$ is isomorphic to a semi-direct product of $G$ with a subgroup of $\oper{Aut}(G)$. In 
particular,
the
group $V$ has finite index in 
$\oper{Gal}(\kappa(\bar X)/\kappa(Y))$, and therefore so does the compositum $GV$. Since $GV$ contains $G$, 
there exists a finite field extension $E'$ 
of $K$,
contained in
$K^{\oper{sep}}$, such
that the
fixed
subfield of $\kappa(\bar X)$ by $GV$ is equal to $\kappa(Y_{E'})$. Note that
$E'\rho(\kappa(X))$ is
the fixed subfield of $\kappa(\bar X)$ by $V$, and therefore we may extend $\rho$ to an embedding of $\kappa(X_{E'})$ into
$\kappa(\bar X)$ with image $E'\rho(\kappa(X))$.

By Lemma
\ref{E:modelssections} (using the extended embedding $\rho$), the map $X_{E'}\rightarrow Y_{E'}$ is Galois because
the image of the restriction of $s$ to $\oper{Gal}(K^{\oper{sep}}/E')$ commutes
with $G$. If $L$ is any finite separable extension of $K$ for which $X_L\rightarrow Y_L$ is Galois, then again by Lemma
\ref{E:modelssections}, the image of the restriction of $s$
to $\oper{Gal}(K^{\oper{sep}}/L)$ commutes with $G$. But this implies that $\oper{Gal}(\kappa(\bar X)/\kappa(Y_L))$ is contained in $GV$. 
Therefore $E'$ is contained in all such $L$, and is in fact the unique minimal one. Therefore $E'$ is equal to $E$, which concludes the proof of assertion (1).

The group $\oper{Gal}(E/K)\cong\oper{Gal}(\kappa(Y_E)/\kappa(Y))$ is isomorphic $s(\oper{Gal}(K^{\oper{sep}}/K))/V$ by the second 
isomorphism 
theorem. It follows from
the above that
$\oper{Gal}(E/K)$ embeds into $\oper{Aut}(G)$. This proves assertion (2) of Theorem \ref{E:tech}. 

Assertion (3) of Lemma \ref{E:group}, applied to
our situation as above, implies that
the field extension $\kappa(X_E)/\kappa(Y)$ is Galois
with Galois group isomorphic to $G\rtimes H$ (where the action of $H$ on $G$ is
given by the embedding of $H$ in $\oper{Aut}(G)$); and that furthermore, we have
$\kappa(X_E)^G=\kappa(Y_E)$, thus proving assertion (3) of Theorem \ref{E:tech}.

In order to prove assertion (4) of Theorem \ref{E:tech}, it suffices to prove that $V$ is the normal core of 
$s(\oper{Gal}(K^{\oper{sep}}/K))$ in $\oper{Gal}(\kappa(\bar X)/\kappa(Y))$ (i.e., the intersection of the groups 
$\sigma^{-1}s(\oper{Gal}(K^{\oper{sep}}/K))\sigma$ as $\sigma$ goes over $\oper{Gal}(\kappa(\bar X)/\kappa(Y))$).

Note that 
for any $\tau$ and $\tau'$ in $s(\oper{Gal}(K^{\oper{sep}}/K))$ and $g\in G$, the equality $g\tau=\tau'g$ implies that the 
restrictions of $\tau$ and $\tau'$ to $K^{\oper{sep}}$ are equal, and therefore (since $s$ is a section) that $\tau=\tau'$. 
Therefore the normal core of 
$s(\oper{Gal}(K^{\oper{sep}}/K))$ in $\oper{Gal}(\kappa(\bar X)/\kappa(Y))$ can be described as:
$$\{\tau\in 
s(\oper{Gal}(K^{\oper{sep}}/K))|\forall \sigma\in \oper{Gal}(\kappa(\bar X)/\kappa(Y))\, \exists \tau'\in 
s(\oper{Gal}(K^{\oper{sep}}/K))\, \oper{s.t.} \sigma\tau=\tau'\sigma\}$$ $$=\{\tau\in s(\oper{Gal}(K^{\oper{sep}}/K))|\forall g\in G\, 
\exists \tau'\in s(\oper{Gal}(K^{\oper{sep}}/K))\, \oper{s.t.} g\tau=\tau'g\}$$ $$=\{\tau\in s(\oper{Gal}(K^{\oper{sep}}/K))|\forall g\in 
G\, 
 \oper{s.t.} g\tau=\tau g\}=V$$
\end{proof}

\begin{rmk}\label{E:explanation}
\rm
The situation of Hypotheses \ref{E:weaaak} in the special case that $K$ is a number field, and $Y=\p_K$, is of particular 
interest in the study of the Inverse Galois Problem. In this situation, recall from \cite{ch} that the field of 
moduli $M$ of $\bar X\rightarrow \p_{\bar \q}$ as a $G$-Galois branched cover is the
intersection of all of its fields of definition as a $G$-Galois branched cover, but is not
necessarily one itself. Moreover, since $M$ contains the field of moduli
of $\bar X \rightarrow \p_{\bar \q}$ as a mere cover, it is a field of definition as a mere cover. (The arguments in \cite{ch} 
in fact generalize easily to 
Hypotheses 
\ref{E:weaaak} with the extra assumptions that $K$ is a number field and that $Y$ has an unramified $K$-rational 
point. However, for 
simplicity's sake, we will continue to assume $Y=\p_K$ throughout the remainder of Part I.)
In light of the fact that every mere cover model has a unique minimal field of definition for its Galois action (as proven in Theorem 
\ref{E:tech}), one can explain the failure of $M$ to be a field of definition as a $G$-Galois branched cover as the
combination of two factors:
\begin{enumerate}
 \item Theorem \ref{E:tech} gives a unique minimal field of definition as a $G$-Galois branched cover for any particular mere cover model over $M$. However
each model might give a different minimal field of definition. Therefore the non-uniqueness of a model of $\bar X \rightarrow \p_{\bar
\q}$ over
$M$ contributes to the plurality of the minimal fields of definition.
\item If $L$ is an overfield of $M$, then there may be a mere cover model of $\bar X\rightarrow \p_{\bar \q}$ over
$L$ that does not descend to a mere cover model over $M$. Indeed, this is always the case if $G$ is not abelian, as the following 
construction shows.

Let $W$ be the compositum of all of the Galois field extensions of $M$ having a Galois group that is isomorphic to a 
subgroup of $\oper{Aut}(G)$. The field $W$ is clearly Galois over $M$, and is not equal to $\bar \q$. Let $L$ be a finite field extension 
of $W$. By Weissauer's Theorem (\cite{weissauer}), the field $L$ is Hilbertian. By Theorem \ref{E:tech}, every mere cover model over $M$ 
becomes Galois when base changed to $L$. Therefore there exists a $G$-Galois branched cover 
$X\rightarrow \p_{L}$ over $L$. In particular (since $L$ is Hilbertian), there exists an epimorphism $\epsilon: 
\oper{Gal}(L)\twoheadrightarrow G$ given by specializing.

Let $s:\oper{Gal}(L)\rightarrow \oper{Gal}(\kappa(\bar X)/L(x))$ be the section 
of
$$1\rightarrow G\rightarrow \oper{Gal}(\kappa(\bar X)/L(x))\rightarrow \oper{Gal}(L)\rightarrow 1$$
given by $\Psi^{-1}(\rho(\kappa(X)))$ for some
embedding $\rho$ of $\kappa(X)$ into $\kappa(\bar X)$. Let 
$s':\oper{Gal}(L)\rightarrow \oper{Gal}(\kappa(\bar X)/L(x))$ be the map defined by $\sigma\mapsto \epsilon(\sigma)s(\sigma)$. Since 
$s(\oper{Gal}(L))$ commutes with $G$ (by Lemma 
\ref{E:modelssections}), the map $s'$ is in fact a homomorphism. In fact, it is easy to check that $s'$ is a section of the above short 
exact sequence.

Since we assume that $G$ is not abelian, there exist elements $g$ and $h$ in $G$ that don't commute. Let $\tau$ be $\epsilon^{-1}(h)$. The 
elements $s'(\tau)$ and $g$ do not commute:
$$[s'(\tau),g]=[\epsilon(\tau)s(\tau),g]=[h\cdot s(\tau),g]=[h,g]\neq 1$$
Therefore, by Lemma \ref{E:modelssections}, the mere cover model of $\bar X\rightarrow \p_{\bar \q}$ over $L$ that 
corresponds to the section $s'$ is not Galois. Assume that this mere cover model descends to $M$. Then by Theorem \ref{E:tech}, it must 
be Galois when base changed to $L$, in contradiction to what we have shown above. Therefore we have constructed a mere cover model of 
$\bar X\rightarrow \p_{\bar \q}$ over $L$ that does not descend to $M$.
\end{enumerate}
\end{rmk}

As an immediate corollary of Theorem \ref{E:tech} we have:
\begin{crl}\label{E:thebigone} Let $K$ be a number field, let $G$ be a finite group, and let $\bar X\rightarrow \p_{\bar \q}$ be a 
$G$-Galois 
branched cover over $\bar \q$. Let $F$ be the field of
moduli of
$\bar X\rightarrow \p_{\bar \q}$ as a mere cover, and let $M$ be the field of moduli of $\bar X\rightarrow \p_{\bar \q}$ as a
$G$-Galois branched cover. Then $M$ is Galois over $F$ with Galois group a subquotient of $\oper{Aut}(G)$.
\end{crl}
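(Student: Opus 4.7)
The plan is to apply Theorem \ref{E:tech} with $K = F$ to a mere cover model over $F$, obtaining a Galois extension $E/F$ with Galois group inside $\oper{Aut}(G)$ that contains $M$, and then to locate $M$ inside $E/F$ as a Galois subextension via an outer-automorphism twist. By the Coombes--Harbater result recalled in the introduction \cite{ch}, the field of moduli $F$ is itself a field of definition as a mere cover, so I fix a mere cover model $X \to \p_F$. Theorem \ref{E:tech} applied with $K = F$ gives a unique minimal field of Galois action $E/F$; this extension is Galois with $\oper{Gal}(E/F) \cong H \le \oper{Aut}(G)$. Since $X \times_F E \to \p_E$ is a $G$-Galois model, the field $E$ is a field of definition as a $G$-Galois cover, and hence $M \subseteq E$; in particular $M/F$ is finite.

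To promote this to the conclusion, I would construct a continuous homomorphism $\beta : \oper{Gal}(\bar\q/F) \to \oper{Out}(G) = \oper{Aut}(G)/\oper{Inn}(G)$ whose kernel is exactly $\oper{Gal}(\bar\q/M)$. For $\sigma \in \oper{Gal}(\bar\q/F)$, the defining property of $F$ produces an isomorphism $\psi_\sigma : \bar X^\sigma \to \bar X$ of mere covers; transporting the $G$-structure on $\bar X$ across $\psi_\sigma$ yields a second $G$-structure on $\bar X^\sigma$, which differs from its natural $\sigma$-twisted one by some $\alpha_\sigma \in \oper{Aut}(G)$. Modifying $\psi_\sigma$ by composition with an element of $G$ alters $\alpha_\sigma$ only by an inner automorphism, so the class $\beta(\sigma) := [\alpha_\sigma] \in \oper{Out}(G)$ is well defined, and a short cocycle computation (comparing the two ways to handle $\bar X^{\sigma\tau} = (\bar X^\tau)^\sigma$) shows $\beta$ is a homomorphism. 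By construction, $\sigma \in \ker \beta$ iff some choice of $\psi_\sigma$ intertwines the two $G$-structures, iff $\bar X^\sigma \cong \bar X$ as $G$-Galois covers, iff $\sigma \in \oper{Gal}(\bar\q/M)$.

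Consequently $\oper{Gal}(\bar\q/M)$ is normal in $\oper{Gal}(\bar\q/F)$, so $M/F$ is Galois with $\oper{Gal}(M/F) \hookrightarrow \oper{Out}(G)$, a subgroup of a quotient of $\oper{Aut}(G)$ and therefore a subquotient; this is consistent with the containment $M \subseteq E$, which alternately identifies $\oper{Gal}(M/F)$ as a quotient of the subgroup $H \le \oper{Aut}(G)$. The main obstacle is the verification that $\beta$ is a well-defined homomorphism with the claimed kernel: the well-definedness up to $\oper{Inn}(G)$ and the multiplicativity in $\sigma$ are both routine but require careful tracking of the various twists by Galois and by $G$.
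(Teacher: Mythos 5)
Your argument is correct, but the way you establish the crucial point --- that $M/F$ is Galois --- is genuinely different from the paper's. The paper invokes the Coombes--Harbater theorem that $M$ is the \emph{intersection} of all fields of definition of $\bar X\rightarrow\p_{\bar\q}$ as a $G$-Galois branched cover, and then shows this collection of fields is permuted by $\oper{Gal}(\bar\q/F)$: for $\sigma\in\oper{Gal}(\bar\q/F)$ and a $G$-Galois model over $L$, the $\sigma$-twist is a Galois mere cover model over $\sigma L$ of (the mere cover) $\bar X\rightarrow\p_{\bar\q}$, so $\sigma L$ is again a field of definition as a $G$-Galois cover; normality of $M/F$ follows, and the group bound comes from Theorem \ref{E:tech} exactly as in your first paragraph. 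You instead bypass the intersection theorem entirely by building the outer Galois representation $\beta:\oper{Gal}(\bar\q/F)\rightarrow\oper{Out}(G)$ attached to the mere cover over its field of moduli and identifying $\ker\beta$ with $\oper{Gal}(\bar\q/M)$; this is the classical field-of-moduli mechanism (in the spirit of D\`ebes--Douai) and it yields the slightly sharper conclusion $\oper{Gal}(M/F)\hookrightarrow\oper{Out}(G)$, whereas the paper's route exhibits $\oper{Gal}(M/F)$ as a quotient of a subgroup $H\leq\oper{Aut}(G)$. The price you pay is the cocycle bookkeeping you flag (well-definedness of $[\alpha_\sigma]$ modulo inner automorphisms, multiplicativity under composition of twists), which is routine but must be done with consistent twisting conventions; note also that both your proof and the paper's quietly use the standard fact that the stabilizers $\{\sigma:\bar X^\sigma\cong\bar X \mbox{ as mere covers}\}$ and $\{\sigma:\bar X^\sigma\cong\bar X \mbox{ as $G$-covers}\}$ are closed (indeed open, since the cover and its $G$-action are defined over a number field), so that they literally equal $\oper{Gal}(\bar\q/F)$ and $\oper{Gal}(\bar\q/M)$; with that granted, your kernel computation and the passage to $\oper{Gal}(M/F)$ are sound, and your first paragraph ($M\subseteq E$ via Theorem \ref{E:tech}) is consistent with, though not needed for, the final statement.
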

\begin{proof}
In light of Theorem \ref{E:tech}, it suffices
to show that $M$ is Galois over $F$. As mention in Remark \ref{E:explanation}, it was proven in \cite{ch} 
that $M$ is the intersection of all of the fields of definition as a $G$-Galois branched cover.
It therefore suffices to prove that for every field of definition $L$ of $\bar X\rightarrow \p_{\bar \q}$ as a $G$-Galois branched
cover, and for every $\sigma$ in $\oper{Gal}(\bar \q/F)$, the field $\sigma L$ is also a field of definition as a $G$-Galois branched cover.
Let $X\rightarrow \p_L$ be an $L$-model as a $G$-Galois branched cover, and let $X\times_L\sigma L\rightarrow \p_{\sigma L}$ be its twist by
$\sigma$. This cover is clearly Galois. Furthermore, note that $X\times_L\sigma L\rightarrow \p_{\sigma L}$ is a mere cover model over $\sigma
L$ of the cover
$\bar X \rightarrow \p_{\bar \q}$ after it has been twisted by $\sigma$. Indeed, by the definition of $F$, the cover resulting from twisting
$\bar X\rightarrow \p_{\bar \q}$ by $\sigma$ is isomorphic to $\bar X\rightarrow \p_{\bar \q}$ as a mere cover. Therefore $\sigma
L$ is a field of definition of $\bar X\rightarrow \p_{\bar \q}$ as a mere cover, and $X\times_L\sigma L\rightarrow \p_{\sigma L}$ is a mere
cover model of this cover that is Galois. In other words, the field $\sigma L$ is field of definition of $\bar X\rightarrow \p_{\bar
\q}$ as a $G$-Galois branched cover, which is what we wanted to prove.
\end{proof}
\section{Part II - $K$ is a Field of Definition as a $G$-Galois Branched Cover}
This section will refer to the following situation.
\begin{hyp}\label{E:hyp1}\rm
The same as Hypotheses \ref{E:superweaaak}, with the extra assumption that $K$ is a field of definition of $\bar X \rightarrow 
\bar Y$ as a $G$-Galois branched cover.
\end{hyp}

Whenever we assume the above hypotheses, we will use the following notation.
\begin{ntt}\label{E:not1}\rm
In the situation of Hypotheses \ref{E:hyp1},  let $\bar B$ be the branch divisor of $\bar X \rightarrow 
\bar Y$, and let 
$B \subset Y$ 
be the Zariski closure of $\bar B$ in $Y$. Let $Y^*=Y\smallsetminus B$ (and $\bar Y^*=Y^*_{K^{\oper{sep}}}$), and fix a geometric 
point $\underline t_0$ of $\bar Y^*$.  We have the short exact sequence of \'etale fundamental groups:
$$1\rightarrow \pi_1(\bar Y^*,\underline t_0)\rightarrow \pi_1(Y^*,\underline 
t_0)\rightarrow \oper{Gal}(K^{\oper{sep}}/K)\rightarrow 1$$
Every  
$K$-rational 
point $P$ of $Y^*$ induces a section, which is defined up to conjugation by an element of 
$\pi_1(Y^*,\underline t_0)$. We denote this associated section by the notation $s_P$.

Every $G$-Galois model $X\rightarrow Y$ defines an epimorphism $\beta^{X}:\pi_1(Y^*,\underline t_0)\twoheadrightarrow G$ up 
to conjugation by an element of $G$. For every $P\in Y^*(K)$, let 
$\phi^{X}_P=\beta^{X}\circ s_P$. 
Notice that it too is well defined up to conjugation by an element of $G$.
\end{ntt}
\begin{dff}\label{E:defoflift}\rm
If a homomorphism $\varphi$ from $\oper{Gal}(K^{\oper{sep}}/K)$ to $G$ is equal to $\phi_P^{X}$ when considered modulo conjugation in $G$, 
then we say 
that 
$\varphi$ is a \it lift \rm of $\phi_P^{X}$.
\end{dff}
\begin{rmk}\label{E:rmkofspec}\rm
Note that for every $P\in Y^*(K)$ the field $(K^{\oper{sep}})^{\ker (\phi_P^{X})}$ is well defined (i.e., is independent of the lift of 
$\phi_P^{X}$), and that the specialization 
of $X\rightarrow Y$ at $P$ is an \'etale algebra 
extension 
$\oper{Spec}(\prod_{i=1}^mL_i)\rightarrow \oper{Spec}(K)$, where the $L_i$'s are all isomorphic to $(K^{\oper{sep}})^{\ker (\phi_P^{X})}$ as mere field extensions over $K$. This observation justifies calling $(K^{\oper{sep}})^{\ker (\phi_P^{X})}$ the \it field induced by specializing 
$X\rightarrow 
Y$ at $P$\rm. 
\end{rmk}
\begin{ntt}\rm
 In the situation of Hypotheses \ref{E:hyp1}, let $X\rightarrow Y$ be a $G$-Galois model of $\bar X\rightarrow 
\bar Y$, and 
let $P$ be in $Y^*(K)$. We will denote the field $(K^{\oper{sep}})^{\ker (\phi_P^{X})}$ induced by specializing $X\rightarrow Y$ at $P$ by 
$L_P^{X}$. 
Furthermore, we will use the notation
$$L_P=\cap_{\{X/Y\mbox{ a $G$-Galois model}\}} 
L_P^{X}.$$
\end{ntt}
\begin{cnv}\rm
 In the remainder of this section, I will use the convention that, in the situation of Hypotheses 
\ref{E:hyp1}, mere cover models of $\bar X\rightarrow \bar Y$ that are not assumed to be Galois will be adorned 
with a tilde 
(e.g., $\tilde X \rightarrow Y$), whereas ones that are assumed to be Galois will be written without a tilde (e.g., $X\rightarrow Y$).
\end{cnv}
The main theorem of this section says that in the situation of Hypotheses \ref{E:hyp1}, as opposed to the situation of Hypotheses \ref{E:weaaak},
 we are able to give an explicit field that contains the minimal field of Galois action of any mere cover model $X\rightarrow Y$, assuming $X$ has a
 $K$-rational point.
\begin{thm}\label{E:crux}
 In the situation of Hypotheses $\ref{E:hyp1}$,
let $\tilde X\rightarrow Y$ be a mere cover model of $\bar X \rightarrow \bar Y$ that has an unramified 
$K$-rational point above some point $P$ in $Y^*(K)$. Then $\tilde X\rightarrow Y$ becomes Galois when base changed to $L_P$. 
Furthermore, if $X\rightarrow Y$ is a 
particular $G$-Galois 
model, then the base change of $\tilde X\rightarrow Y$ to $L_P^{X}$ is isomorphic to the base 
change of $X\rightarrow Y$ to $L_P^{X}$ as mere cover models.
\end{thm}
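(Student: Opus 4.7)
The plan is to translate the statement into the section-theoretic language of Part~I. By Hypotheses~\ref{E:hyp1} we may fix once and for all a $G$-Galois model $X\to Y$; its associated section $s_X$ of the short exact sequence
$$1\to G\to\oper{Gal}(\kappa(\bar X)/\kappa(Y))\xrightarrow{f}\oper{Gal}(K^{\oper{sep}}/K)\to 1$$
has image commuting with $G$ by Lemma~\ref{E:modelssections}, so $\oper{Gal}(\kappa(\bar X)/\kappa(Y))$ is the internal direct product $G\times s_X(\oper{Gal}(K^{\oper{sep}}/K))$. In this decomposition every section of $f$ takes the form $\sigma\mapsto(\chi(\sigma),s_X(\sigma))$ for a unique continuous homomorphism $\chi\colon\oper{Gal}(K^{\oper{sep}}/K)\to G$, and such a section corresponds to a $G$-Galois model precisely when $\chi$ is valued in $Z(G)$. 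Write $\chi$ for the cocycle attached to $\tilde X\to Y$ via $\Psi$, and for each $G$-Galois model $X'\to Y$ write $\chi'\colon\oper{Gal}(K^{\oper{sep}}/K)\to Z(G)$ for its cocycle.

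Two computations set up the rest. First, applying Lemma~\ref{E:modelssections} to the restriction of the section for $\tilde X$ to an open subgroup $\oper{Gal}(K^{\oper{sep}}/L)$ shows that the minimal field of Galois action of $\tilde X$ equals the fixed field $E$ of $\chi^{-1}(Z(G))$. Second, on the fundamental-group side the two surjections $\beta^{X},\beta^{X'}\colon\pi_1(Y^*,\underline t_0)\twoheadrightarrow G$ restrict to the common $\beta^{\bar X}$ on $\pi_1(\bar Y^*,\underline t_0)$ and differ by $\chi'$ pulled back through the projection to $\oper{Gal}(K^{\oper{sep}}/K)$, yielding the identity $\phi_P^{X'}=\chi'\cdot\phi_P^{X}$ of maps into $G$ (well defined as a homomorphism because $\chi'$ takes values in $Z(G)$). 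In particular $\ker\phi_P^{X'}\subseteq(\phi_P^{X})^{-1}(Z(G))$ for every $G$-Galois model $X'$.

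The heart of the argument, and the main obstacle, is identifying $\chi$ with $\phi_P^{X}$ modulo $G$-conjugation. The geometric fiber of $\tilde X$ above $P$, viewed as a $\oper{Gal}(K^{\oper{sep}}/K)$-set of cardinality $|G|$, is the $\chi$-twist of the analogous fiber for $X$, which is itself the $G$-torsor classified by $\phi_P^{X}$; hence the existence of an unramified $K$-rational point of $\tilde X$ over $P$ should be equivalent to $\chi$ being $G$-conjugate to $\phi_P^{X}$ as a homomorphism to $G$. This is essentially the content of ``The Twisting Lemma'' (Lemma~\ref{E:twistinglemma}) read through the section correspondence $\Psi$ of Lemma~\ref{E:becky}, and carefully verifying it on the specialization fiber is where I expect the bulk of the technical work to lie. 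After replacing $\tilde s$ by a suitable $G$-conjugate section (which yields an isomorphic mere cover model) we may therefore assume $\chi=\phi_P^{X}$ on the nose.

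With $\chi=\phi_P^{X}$ in hand, the remaining deductions are formal. For each $G$-Galois model $X'$, the inclusion from the previous paragraph becomes $\ker\phi_P^{X'}\subseteq\chi^{-1}(Z(G))$; taking fixed fields yields $E\subseteq L_P^{X'}$, and intersecting over all $X'$ gives $E\subseteq L_P$, which by Theorem~\ref{E:tech}(1) settles the first assertion. For the second assertion, $\chi=\phi_P^{X}$ is trivial on $\oper{Gal}(K^{\oper{sep}}/L_P^{X})=\ker\phi_P^{X}$, so the sections $\tilde s$ and $s_X$ agree on that subgroup and hence have the same fixed subfield inside $\kappa(\bar X)$ over $\kappa(Y_{L_P^{X}})$. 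This common fixed subfield is the function field of both $\tilde X\times_K L_P^{X}$ and $X\times_K L_P^{X}$, giving the desired isomorphism of mere cover models.
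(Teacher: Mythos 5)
Your argument is correct in outline and is essentially the paper's: both rest on the twisting/section formalism, the Twisting Lemma, and the criterion that a mere cover model becomes Galois over $L$ exactly when its cocycle lands in $Z(G)$ on $\oper{Gal}(K^{\oper{sep}}/L)$ (your direct-product computation is the paper's Lemma \ref{E:galcenter}, combined with the remark that base change to $L$ corresponds to restricting the section). The one step you leave unverified --- that an unramified $K$-rational point of $\tilde X$ over $P$ forces your section-cocycle $\chi$ to be $G$-conjugate to $\phi_P^{X}$ --- is precisely where the paper inserts a separate lemma: Lemma \ref{E:twisted} shows that the twist $X^{\alpha}$ of Definition \ref{E:defoftwist} corresponds, under $\Psi\circ\Sigma_{\rho(\kappa(X))}$, to the section with cocycle $\alpha$, so that the Twisting Lemma \ref{E:twistinglemma} (stated in the $\pi_1$ picture for $X^{\alpha}$) can be read in your section language. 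So your instinct that this is where the technical work lies is right; it is carried out by the commutative-diagram comparison of $\pi_1(Y\smallsetminus B,\underline t_0)$ with $\oper{Gal}(\kappa(\bar X)/\kappa(Y))$ in the proof of Lemma \ref{E:twisted}, and citing that lemma (rather than redoing the fiber computation) closes your gap. The only genuine difference of route is how you handle the other $G$-Galois models $X'$: you compare specializations via the identity $\phi_P^{X'}=\chi'^{\pm 1}\cdot\phi_P^{X}$, which is correct because $\chi'$ is central-valued but deserves the short verification that, in the decomposition $\oper{Gal}(\kappa(\bar X)/\kappa(Y))=G\times s_X(\oper{Gal}(K^{\oper{sep}}/K))$, the projection attached to $X'$ sends $h\,s_X(\tau)$ to $h\,\chi'(\tau)^{-1}$ (mind the inverse); the paper instead avoids any such identity by observing (Remark \ref{E:someall}) that $\tilde X$ is a twist of \emph{every} $G$-Galois model and re-applying Lemma \ref{E:twistinglemma} with $X'$ in place of $X$, which is slightly cleaner but buys nothing essential. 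Your treatment of the final assertion (triviality of the cocycle on $\oper{Gal}(K^{\oper{sep}}/L_P^{X})$ gives equal fixed fields over $\kappa(Y_{L_P^{X}})$, hence isomorphic models) matches the paper's argument that the restricted twist is the trivial twist.
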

The same proof mechanism that gives Theorem \ref{E:crux} also gives the following theorem for free:
\begin{thm}\label{E:specialization}
In the situation of Hypotheses \ref{E:hyp1}, there exists an equivalence relation ``$\equiv$'' on $Y^*(K)$ such that the following 
hold:
\begin{enumerate}
\item 
For every mere cover model $\tilde X \rightarrow Y$ of $\bar X\rightarrow \bar Y$, if $\tilde X$ has an 
unramified 
$K$-rational point then there exists a $P\in Y^*(K)$, and a natural number $d$, which is 
divisible by $|Z(G)|$ and divides $|G|$, such that for every point $Q\in Y^*(K)$ the fiber over $Q$ contains exactly $d$ many 
$K$-rational 
points if $Q\equiv P$ and $0$ otherwise.
  \item For every $P\in Y^*(K)$ there exists a unique (up to isomorphism) mere cover model $\tilde X\rightarrow Y$ of 
$\bar X\rightarrow \bar Y$ such that the unramified $K$-rational points 
of $\tilde X$ lie exactly above its equivalence class $[P]_{\equiv}$.
Furthermore, if $d$ is the number of $K$-rational points of $\tilde 
X$ in the fiber 
of each $Q\in [P]_{\equiv}$,  then $\tilde X\rightarrow Y$ has precisely $|G|/d$ many Galois conjugates.
\item Let $X\rightarrow Y$ be a $G$-Galois model of $\bar X \rightarrow \bar Y$ over $K$, and let $P$ and $Q$ be 
two points in 
$Y^*(K)$. Then $P\equiv Q$ if and only if $\phi_P^{X}$ and $\phi_Q^{X}$ are equal (up to conjugation in $G$). 
\end{enumerate}
\end{thm}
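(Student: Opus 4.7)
The plan is to pick a $G$-Galois model $X_0 \to Y$ (provided by Hypotheses \ref{E:hyp1}) and \emph{define} $P \equiv Q$ to mean that $\phi_P^{X_0}$ and $\phi_Q^{X_0}$ are conjugate in $G$; assertion (3) then holds by construction, provided the relation is independent of the chosen $G$-Galois model. I will verify independence by observing that any other $G$-Galois model $X_0'$ is a twist of $X_0$ by a continuous homomorphism $c_0:\oper{Gal}(K^{\oper{sep}}/K)\to G$ whose image commutes with $G$ (forced by Lemma \ref{E:modelssections}), hence lies in $Z(G)$; therefore $\phi_P^{X_0'}=c_0\cdot\phi_P^{X_0}$ pointwise, and multiplication by a $Z(G)$-valued cocycle preserves $G$-conjugacy classes of specializations.

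Mirroring the mechanism of Theorem \ref{E:crux}, I will then classify mere cover models via Lemma \ref{E:becky}. Since $X_0$ is $G$-Galois, Lemma \ref{E:modelssections} produces a canonical section $s_{X_0}$ whose image centralizes $G$, splitting $\oper{Gal}(\kappa(\bar X)/\kappa(Y))\cong G\times\oper{Gal}(K^{\oper{sep}}/K)$. Every mere cover model $\tilde X\to Y$ then corresponds, up to isomorphism, to a $G$-conjugacy class of continuous homomorphisms $c:\oper{Gal}(K^{\oper{sep}}/K)\to G$, via the section $\sigma\mapsto(c(\sigma),\sigma)$; I call the resulting mere cover $\tilde X^c$.

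The technical heart of the argument is the fiber computation. For an unramified $P\in Y^*(K)$, the geometric fiber of $X_0$ over $P$ is a $G$-torsor identifiable with $G$ after a choice of base point, on which $\oper{Gal}(K^{\oper{sep}}/K)$ acts by $\sigma\cdot g=\phi_P^{X_0}(\sigma)\,g$. The twist by $c$ replaces this action with $\sigma*_c g=\phi_P^{X_0}(\sigma)\,g\,c(\sigma)^{-1}$, so the $K$-rational points of $\tilde X^c$ above $P$ are the fixed points of $*_c$, characterized by $c(\sigma)=g^{-1}\phi_P^{X_0}(\sigma)\,g$ for every $\sigma$. Hence $\tilde X^c$ has $K$-points above $P$ exactly when $[c]=[\phi_P^{X_0}]$ as $G$-conjugacy classes, and when this holds the count equals $d:=|C_G(\phi_P^{X_0}(\oper{Gal}(K^{\oper{sep}}/K)))|$; the chain $Z(G)\subseteq C_G(H)\subseteq G$ then yields $|Z(G)|\mid d\mid |G|$.

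From this, assertion (1) follows because $\tilde X^c$ has $K$-points above exactly those $Q$ with $\phi_Q^{X_0}\sim_G\phi_P^{X_0}$, i.e., over $[P]_{\equiv}$, with constant count $d$; the mere cover in (2) is $\tilde X^{\phi_P^{X_0}}$, unique because the class $[\phi_P^{X_0}]$ is determined by $P$. The Galois conjugate count in (2) follows from orbit-stabilizer: the $G\times\oper{Gal}(K^{\oper{sep}}/K)$-conjugates of $\kappa(\tilde X^c)\subset\kappa(\bar X)$ are given by the $G$-orbit of the section $(c(\cdot),\cdot)$ (the $\oper{Gal}$-factor fixes $\kappa(\tilde X^c)$ since $\tilde X^c$ descends to $K$), whose stabilizer under $G$-conjugation is $C_G(c(\oper{Gal}(K^{\oper{sep}}/K)))$ of order $d$, giving $|G|/d$ distinct conjugates. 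The main obstacle will be pinning down the left/right and cocycle-direction conventions in the fiber computation so that $K$-rational points are characterized by the correct $G$-conjugacy condition between $c$ and $\phi_P^{X_0}$; once conventions are fixed, the remainder combines Lemmas \ref{E:becky}, \ref{E:modelssections}, and orbit-stabilizer in a routine way.
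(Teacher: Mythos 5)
Your proposal follows essentially the same route as the paper: you classify mere cover models as twists $X^{c}$ of a fixed $G$-Galois model by homomorphisms $c:\oper{Gal}(K^{\oper{sep}}/K)\to G$ taken up to conjugation in $G$ (this is Lemma \ref{E:twisted}), your fiber fixed-point computation is exactly the paper's Twisting Lemma (Lemma \ref{E:twistinglemma}, including the count $|C_G(\oper{img}\alpha)|$), and assertions (1)--(3) are extracted in the same way. Two points deserve comment. First, your model-independence argument differs from the paper's Lemma \ref{E:yay}: you use that any two $G$-Galois models differ by a twist with values in $Z(G)$ (Lemmas \ref{E:twisted} and \ref{E:galcenter}) and that a central twist multiplies $\phi_P$ by a central character. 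This is correct in substance, but the identity $\phi_P^{X^{c_0}}=\phi_P^{X}\cdot c_0^{-1}$ (up to conjugation) is precisely the convention-sensitive verification you defer: one must check that the canonical $G$-structure on the twisted model corresponds to the epimorphism $\sigma\mapsto\beta(\sigma)c_0(r(\sigma))^{-1}$. The paper avoids this entirely by characterizing $P\equiv_{X/Y}Q$ through the existence of a $K$-rational point of the mere cover $X^{\varphi_P^{X}}$ above $Q$, a condition visibly independent of the $G$-Galois model. Second, in your count of Galois conjugates the parenthetical claim that the $\oper{Gal}(K^{\oper{sep}}/K)$-factor fixes $\kappa(\tilde X^{c})$ is false in general: under the splitting, $(1,\sigma)$ carries the subgroup $\{(c(\tau),\tau)\}$ to $\{(c(\sigma^{-1}\tau\sigma),\tau)\}$, and since $c$ is a homomorphism this agrees with the original subgroup only when $c(\sigma)$ centralizes $\oper{img}(c)$, which fails whenever $\oper{img}(c)$ is nonabelian. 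Your final count nevertheless stands, because $(1,\sigma)=(c(\sigma)^{-1},1)\cdot(c(\sigma),\sigma)$ and the second factor lies in the subgroup itself, so conjugation by $(1,\sigma)$ has the same effect as conjugation by $(c(\sigma)^{-1},1)\in G\times 1$; hence the full conjugation orbit coincides with the $G$-orbit, of size $[G:C_G(\oper{img}(c))]=|G|/d$, which is the paper's count. With these two repairs your argument is complete and matches the paper's proof.
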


\begin{rmk}\rm \label{E:remarkaftermain}
Notice that the Galois group $\oper{Gal}(L_P^{X}/K)$ comes equipped with a homomorphism to $G$ (unique up to 
conjugation 
in $G$). In particular if 
$\oper{img}(\phi_P^{X})=G$  then $L_P^{X}/K$ is equipped with the structure of a $G$-Galois extension of $K$. 
Furthermore, if $P$ and $Q$ are two $K$-rational points of $Y$ such that
$\oper{img}(\phi^{X}_{P})=\oper{img}(\phi^{X}_{Q})=G$ (or equivalently if the specializations of 
$X/Y$ at $P_1$ and $P_2$ are $G$-Galois field extensions of $K$) then $\phi_{P}^{X}$ and $\phi_{Q}^{X}$ are equal (modulo 
conjugation in $G$) if and only if the field extensions induced by specializing $X\rightarrow Y$ at $P$ and at $Q$ are 
\it isomorphic as $G$-Galois extensions of $K$. \rm
\end{rmk}

Both Theorem \ref{E:crux} and Theorem \ref{E:specialization} follow from the observation (Lemma \ref{E:twisted}) that every mere cover model of $\bar X\rightarrow \bar Y$
over $K$ is the ``twist'' of every $G$-Galois model, in the sense that I will describe below. (It is in this context that Theorem \ref{E:specialization} can be viewed as a
reverse version of ``The Twisting Lemma''; see \cite{debesgrunwald}, as well as Lemma \ref{E:twistinglemma} appearing in the following subsection.)

\subsection{Twisted Covers}
We will be using the notion of twisting a $G$-Galois model by a homomorphism from $\oper{Gal}(K^{\oper{sep}}/K)$ to $G$. This concept can be viewed
as a special case of the notion of a ``contracted product'' in the theory of torsors. (See \cite{dem}, III.4.1; \cite{torsy}, 2.2.) Pierre D\`ebes, in a series of papers
beginning with \cite{twist}, has applied this notion in order
to study the specializations of $G$-Galois branched covers.

So that I may define ``twists'' I will need the following well-known proposition:

\begin{prp}\label{E:correspondence}
 Let $Y$ be an integral regular scheme, and let $B$ be a reduced codimension $1$ subscheme of $Y$. Let $\underline t_0$ be a geometric 
point of $Y\smallsetminus B$. Then there is a bijection between the following:
 \begin{enumerate}
  \item Homomorphisms from 
$\pi_1(Y\smallsetminus B, \underline t_0)$ to $S_n$, modulo conjugation in $S_n$.
  \item Equivalence classes of (mere) covers of $Y$ of degree $n$ that are unramified away from $B$.
  \end{enumerate}
  This bijection is given by mapping $\gamma:\pi_1(Y\smallsetminus B, \underline t_0)\rightarrow S_n$ to the cover defined by the subgroup 
$\gamma^{-1}(\{\sigma\in S_n|\sigma(1)=1\})$ of $\pi_1(Y\smallsetminus B, \underline t_0)$.
\end{prp}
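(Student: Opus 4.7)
The plan is to factor the bijection through the intermediate category of finite \'etale covers of $U := Y \smallsetminus B$, and to combine two classical inputs: Grothendieck's Galois theory of the \'etale fundamental group (SGA1) and Zariski--Nagata purity of the branch locus.

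First I would set up the bijection between conjugacy classes of homomorphisms $\pi_1(U,\underline t_0) \to S_n$ and isomorphism classes of finite \'etale covers of $U$ of degree $n$. By SGA1, the fiber functor $F_{\underline t_0}$ defines an equivalence between the category of finite \'etale covers of $U$ and the category of finite continuous $\pi_1(U,\underline t_0)$-sets. A degree $n$ cover corresponds to a $\pi_1$-set of cardinality $n$; identifying this set with $\{1,\ldots,n\}$ converts the action into a continuous homomorphism $\gamma:\pi_1(U,\underline t_0)\to S_n$, and changing the identification amounts to conjugating $\gamma$ by an element of $S_n$. The subgroup $\gamma^{-1}(\{\sigma\in S_n:\sigma(1)=1\})$ is by construction the stabilizer of the point of the fiber labeled $1$, and the standard Galois correspondence between open subgroups of $\pi_1(U,\underline t_0)$ and pointed connected finite \'etale covers of $U$ identifies this subgroup with the cover carved out by the chosen marked point.

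Next I would extend the correspondence from $U$ to $Y$. Given a finite \'etale cover $V \to U$, decompose $V$ into its connected components $V_i$ and let $X_i$ be the normalization of $Y$ in $\kappa(V_i)$; then $X := \bigsqcup X_i$ is normal, finite over $Y$, and satisfies $X \times_Y U \cong V$, so $X \to Y$ is a cover of $Y$ in the sense of Definition \ref{E:defofgalois}, unramified away from $B$. Conversely, restriction sends a cover of $Y$ unramified away from $B$ to a finite \'etale cover of $U$. That these two constructions are mutually inverse on isomorphism classes relies on Zariski--Nagata purity: since $Y$ is regular, every finite cover of $Y$ that is \'etale over the complement of a pure-codimension-one closed subscheme is uniquely determined by, and reconstructible from, its restriction to that complement. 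Composing with the first bijection yields the bijection of the proposition.

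The main obstacle is essentially bookkeeping around the extension step and the careful invocation of Zariski--Nagata: one must check that the normalization of $Y$ in an \'etale $U$-algebra is finite (not merely integral) over $Y$, and that the resulting finite map agrees, after restriction to $U$, with the cover one started from. Both follow from $Y$ being regular (hence normal and excellent in the relevant local rings) together with the purity theorem; with these in hand, SGA1 supplies the identification of $\gamma^{-1}(\{\sigma\in S_n:\sigma(1)=1\})$ with the \'etale cover via its pointed fiber, and the proposition follows.
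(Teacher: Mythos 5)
The paper offers no proof of Proposition \ref{E:correspondence}; it is invoked as a well-known fact, so there is no argument of the author's to compare yours against. Your proof is the standard one and is essentially correct: Grothendieck's equivalence between finite \'etale covers of $U=Y\smallsetminus B$ and finite continuous $\pi_1(U,\underline t_0)$-sets, followed by the normalization/restriction dictionary between \'etale covers of $U$ and normal covers of $Y$ unramified away from $B$. Two attributions deserve tightening, though neither is a genuine gap. First, the ``uniquely determined and reconstructible'' step is not really Zariski--Nagata purity but the normalization argument you already sketch: a normal scheme finite over $Y$ is the normalization of $Y$ in the function fields of its components, and normalization commutes with restriction to the dense open $U$, which is exactly the mutual-inverse statement; purity enters only to know that a cover whose branch \emph{divisor} (the codimension-one part of the branch locus) lies in $B$ is genuinely \'etale over all of $U$, i.e., that the branch locus of a finite cover of a regular scheme with normal total space is pure of codimension one. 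Second, finiteness of the normalization of $Y$ in $\kappa(V_i)$ follows from the extension of function fields being finite and separable (the cover is generically \'etale), which holds over any noetherian normal domain; regularity does not imply excellence in general, so that parenthetical should be dropped. Finally, note that the recipe in the statement, sending $\gamma$ to the cover attached to $\gamma^{-1}(\{\sigma\in S_n\,:\,\sigma(1)=1\})$, literally produces only the connected component through the marked point of the fiber; for non-transitive $\gamma$ one must take the disjoint union over orbits, i.e., work with the full $\pi_1$-set as you do, so your bookkeeping with connected components is the correct reading of the statement rather than an extra complication.
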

\begin{dff}\label{E:defoftwist}\rm
 In the situation of Hypotheses \ref{E:hyp1}, let 
$r:\pi_1(Y^*,\underline t_0)\rightarrow \oper{Gal}(K^{\oper{sep}}/K)$ be the map induced by the structure morphism, let $S_G$ denote 
the group of permutation on the elements of the group $G$, and let $X\rightarrow Y$ be the $G$-Galois model of 
$\bar X\rightarrow 
\bar Y$ over $K$ induced by 
an epimorphism 
$\beta:\pi_1(Y^*,\underline t_0)\twoheadrightarrow G$. Let $\alpha$ be some homomorphism from $\oper{Gal}(K^{\oper{sep}}/K)$ to $G$. 
We 
define 
\it the twist $X^{\alpha}\rightarrow Y$ of $X\rightarrow Y$ by $\alpha$ \rm to be the mere cover model induced (as in 
Proposition \ref{E:correspondence}) by 
the homomorphism:
$$\pi_1(Y^*,\underline t_0)\rightarrow S_{G}$$
given by $\sigma\mapsto f_{\sigma}$, where $f_{\sigma}$ is the permutation on the elements of $G$ given by \\$h\mapsto 
\beta(\sigma)\cdot h\cdot (\alpha(r(\sigma)))^{-1}$.
\end{dff}
\begin{rmk}\rm
\begin{enumerate}\label{E:credits}\
 \item
Note that since $\pi_1(\bar Y^*, \underline t_0)$ is equal to the kernel of $r$, the twisted cover 
$X^{\alpha}\rightarrow Y$ is in fact a mere cover model of $\bar X \rightarrow \bar Y$.

 Furthermore, an easy check shows that Definition \ref{E:defoftwist} depends only on the $G$-Galois model, and not on the 
epimorphism $\beta$. This follows from the fact that the $G$-Galois model $X\rightarrow Y$ fixes $\beta$ up to conjugation in $G$. 
Indeed, if $\beta'=g^{-1}\beta g$ for some $g\in G$, then the homomorphism from $\pi_1(Y^*,\underline t_0)$ to $S_{G}$ 
associated to $\beta'$ is conjugate to the one associated to $\beta$ by the permutation that takes $h$ to $g^{-1}h$. 
\item
I have chosen to loosely follow the conventions used in \cite{debesgrunwald}. Texts that use the ``contracted product'' definition
(as appearing in \cite{torsy}, 2.2) tend not to focus on the case where the structure group is a finite constant group scheme, and in
particular don't highlight the study of mere cover models of a given $G$-Galois branched cover over $K^{\oper{sep}}$.
\end{enumerate}
\end{rmk}
The following is a slight strengthening (see Remark \ref{E:howslight}) of the version of the ``Twisting Lemma'' appearing in 
\cite{debesgrunwald}, Section 2.
\begin{lem} \label{E:twistinglemma} (The Twisting Lemma)
 In the situation of Hypotheses \ref{E:hyp1}, let $P$ be an unramified $K$-rational point of $Y$ and let 
$\alpha:\oper{Gal}(K^{\oper{sep}}/K)\rightarrow G$ be a homomorphism. Then:
\begin{enumerate}
 \item The homomorphism $\alpha$ is a lift of $\phi_P^{X}$ (see Definition \ref{E:defoflift}) if and only if $X^{\alpha}\rightarrow 
Y$ has a $K$-rational point over $P$.
\item For such a point, the number of $K$-rational 
points above $P$ is equal to the order of the centralizer $C_G(\oper{img}(\alpha))$ of $\oper{img}(\alpha)$ in $G$.
\end{enumerate}
\end{lem}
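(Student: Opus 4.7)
The plan is to translate the problem into the language of $\pi_1$-representations via Proposition~\ref{E:correspondence}, and then reduce it to a direct calculation on the geometric fiber. Fix an epimorphism $\beta: \pi_1(Y^*, \underline t_0) \twoheadrightarrow G$ representing the $G$-Galois model $X \to Y$; by Proposition~\ref{E:correspondence} and Definition~\ref{E:defoftwist}, the twist $X^\alpha \to Y$ corresponds to the permutation representation $\sigma \mapsto f_\sigma \in S_G$ with $f_\sigma(h) = \beta(\sigma) \cdot h \cdot \alpha(r(\sigma))^{-1}$. Under this correspondence, the set of $K$-rational points of $X^\alpha$ over $P$ is in bijection with the elements of $G$ fixed by the subgroup $s_P(\oper{Gal}(K^{\oper{sep}}/K)) \subset \pi_1(Y^*, \underline t_0)$, via the standard identification of $K$-points in the fiber over $P$ with $\oper{Gal}(K^{\oper{sep}}/K)$-fixed points in the geometric fiber, where Galois acts through $s_P$. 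The main technical point of the proof is exactly this identification; once it is in hand, everything else is a short computation.

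Granting this, since $r \circ s_P$ is the identity on $\oper{Gal}(K^{\oper{sep}}/K)$, we obtain $f_{s_P(\tau)}(h) = \phi_P^{X}(\tau) \cdot h \cdot \alpha(\tau)^{-1}$ for every $\tau$. Hence $h \in G$ is a common fixed point of all the $f_{s_P(\tau)}$ if and only if $\alpha(\tau) = h^{-1} \phi_P^{X}(\tau) h$ for all $\tau$, equivalently $\alpha = h^{-1} \phi_P^{X} h$ as honest homomorphisms. By Definition~\ref{E:defoflift}, this is precisely the condition that $\alpha$ is a lift of $\phi_P^{X}$, which proves part~(1): the existence of a $K$-rational point over $P$ is equivalent to $\alpha$ being a lift of $\phi_P^{X}$.

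For part~(2), suppose $h_0 \in G$ realizes $\alpha = h_0^{-1} \phi_P^{X} h_0$. Then another $h \in G$ satisfies the same equation if and only if $\phi_P^{X}(\tau) (h h_0^{-1}) = (h h_0^{-1}) \phi_P^{X}(\tau)$ for all $\tau$, i.e., $h h_0^{-1} \in C_G(\oper{img}(\phi_P^{X}))$. Therefore the set of such $h$ is a single coset $C_G(\oper{img}(\phi_P^{X})) \cdot h_0$, of cardinality $|C_G(\oper{img}(\phi_P^{X}))|$. Since $\oper{img}(\alpha) = h_0^{-1} \oper{img}(\phi_P^{X}) h_0$, conjugation by $h_0$ identifies the two centralizers, so $|C_G(\oper{img}(\phi_P^{X}))| = |C_G(\oper{img}(\alpha))|$, giving the desired count. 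The various choices made in the setup (the representative of $\beta$ within its $G$-conjugacy class, and of $s_P$ within its $\pi_1$-conjugacy class) enter only through $G$-conjugation in the formulas above, which is harmless because both statements are phrased in conjugation-invariant terms (``lift of $\phi_P^X$'' and ``order of $C_G(\oper{img}(\alpha))$'').
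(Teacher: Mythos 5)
Your proposal is correct and follows essentially the same route as the paper's own proof: both translate the question into fixed points of $s_P(\oper{Gal}(K^{\oper{sep}}/K))$ acting on the fiber via the permutations $f_{s_P(\tau)}(h)=\varphi_P^{X}(\tau)\cdot h\cdot\alpha(\tau)^{-1}$, deduce part (1) from the condition $\alpha=h^{-1}\varphi_P^{X}h$, and count the solutions $h$ as a single centralizer coset for part (2). The only cosmetic difference is that you count via $C_G(\oper{img}(\varphi_P^{X}))$ and transfer by conjugation, whereas the paper writes the coset directly in terms of $C_G(\oper{img}(\alpha))$; the arguments are the same.
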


\begin{proof}

 The point $P$ induces a section $s_P:\oper{Gal}(K^{\oper{sep}}/K)\rightarrow \pi_1(Y^*, \underline t_0)$, defined up to 
conjugation in $\pi_1(Y^*, \underline t_0)$. For the remainder of the proof, fix a representative of $s_P$, and fix an 
epimorphism $\beta:\pi_1(Y^*,\underline t_0)\twoheadrightarrow G$ that induces the $G$-Galois model $X\rightarrow Y$.
Note that $\varphi_P^{X}:=\beta \circ s_P$ is a lift of $\phi_P^{X}$.

The set of 
rational points above $P$ is in bijection with the set of elements of $G$ fixed by $\{f_{s_P(\tau)}|\tau\in 
\oper{Gal}(K^{\oper{sep}}/K)\}\subset S_G$. 
(See Definition \ref{E:defoftwist}.) In particular there exists a $K$-rational point in $X^{\alpha}$ above $P$ if and only if there 
exists an element $h\in G$ such that for every $\tau\in \oper{Gal}(K^{\oper{sep}}/K)$ we have 
$h=\beta(s_P(\tau))\cdot h\cdot \alpha(r(s_P(\tau)))^{-1}(=\varphi_P^{X}(\tau)\cdot h\cdot \alpha(\tau)^{-1})$. Therefore, there exists a 
$K$-rational point in $X^{\alpha}$ above $P$ if and only if there exists an $h\in G$ such that $\varphi_P^{X}=h\alpha 
h^{-1}:\oper{Gal}(K^{\oper{sep}}/K)\rightarrow G$, proving the first assertion in the lemma.

Assume there exists an $h\in G$ as above. 
Then 
$h'\in G$ also satisfies $\varphi_P^{X}=h' \alpha {h'}^{-1}$ if and only if 
$\alpha=h'^{-1}h\alpha(h'^{-1}h)^{-1}$. In 
other 
words, if and only if ${h'}^{-1}$  is an element of $C_G(\oper{img}(\alpha))h^{-1}$. Therefore, if there exists 
at least one $K$-rational point in 
$X^{\alpha}$ above $P$, there exist precisely $|C_G(\oper{img}(\alpha))|$ many.
\end{proof}
\begin{rmk}\label{E:howslight}\rm
 Assertion (1) of Lemma \ref{E:twistinglemma} above is precisely Lemma 2.1 in \cite{debesgrunwald}, and assertion (2) is the strengthening. 
I will 
need assertion (2) in the proof of Theorem \ref{E:specialization}.
\end{rmk}
The remainder of this subsection is devoted to proving a few observations about twisted covers that will be helpful
in the proof of Theorems \ref{E:crux} and \ref{E:specialization}. We will freely make use of the set $\Omega$ and bijection $\Psi$
introduced in Part I of this paper. 
\begin{ntt}\rm
 In order to simply notation, for every field $F$ in $\Omega$ we will denote the section $\Psi^{-1}(F)$ by $w_F$.
\end{ntt}
Under Hypotheses \ref{E:hyp1}, we have the following short exact sequence:
$$1\rightarrow \oper{Gal}(\kappa(\bar X)/\kappa(\bar Y))\rightarrow 
\oper{Gal}(\kappa(\bar X)/\kappa(Y))\xrightarrow[]{r'} \oper{Gal}(K^{\oper{sep}}/K)\rightarrow 1$$

Every field $F$ in $\Omega$ induces an isomorphism of group 
extensions:
$$\begindc{\commdiag}[330]
\obj(-1,0)[aa]{$1$}
\obj(2,0)[ab]{$\oper{Gal}(\kappa(\bar X)/\kappa(\bar Y))$}
\obj(6,0)[ac]{$\oper{Gal}(\kappa(\bar X)/\kappa(Y))$}
\obj(9,0)[ad]{$\oper{Gal}(K^{\oper{sep}}/K)$}
\obj(11,0)[ae]{$1$}
\mor{aa}{ab}{}
\mor{ab}{ac}{}
\mor{ac}{ad}{$r'$}
\mor{ad}{ae}{}
\obj(-1,2)[ba]{$1$}
\obj(2,2)[bb]{$G$}
\obj(6,2)[bc]{$G\times \oper{Gal}(K^{\oper{sep}}/K)$}
\obj(9,2)[bd]{$\oper{Gal}(K^{\oper{sep}}/K)$}
\obj(11,2)[be]{$1$}
\mor{ba}{bb}{}
\mor{bb}{bc}{$i$}
\mor{bc}{bd}{$p$}
\mor{bd}{be}{}
\mor{bb}{ab}{$\Phi$}
\mor{bc}{ac}{$\Phi\times w_F$}
\mor{bd}{ad}{}
\enddc$$
with inverse $(\Phi\times w_F)^{-1}:\oper{Gal}(\kappa(\bar X)/\kappa(Y))\rightarrow G\times 
\oper{Gal}(K^{\oper{sep}}/K)$ taking  $\sigma$ 
to 
$(\Phi^{-1}(\sigma|_F),r'(\sigma))$.

There is a natural bijection between $\oper{Hom}(\oper{Gal}(K^{\oper{sep}}/K),G)$ and $\oper{Sec}(p)$ taking a homomorphism 
$\alpha\in\oper{Hom}(\oper{Gal}(K^{\oper{sep}}/K),G)$ to the section that takes $\tau \in \oper{Gal}(K^{\oper{sep}}/K)$ to 
$(\alpha(\tau),\tau)$. Therefore, via this 
isomorphism of group extensions above, the field $F$ induces a bijection\\ 
$\Sigma_F:\oper{Hom}(\oper{Gal}(K^{\oper{sep}}/K),G)\rightarrow \oper{Sec}(r')$ taking a homomorphism 
$\alpha\in\oper{Hom}(\oper{Gal}(K^{\oper{sep}}/K),G)$ to the section that takes $\tau\in \oper{Gal}(K^{\oper{sep}}/K)$ to 
$\Phi(\alpha(\tau))w_F(\tau)$.

\begin{lem}\label{E:twisted}
In the situation above, let $X\rightarrow Y$ be a $G$-Galois model of $\bar X\rightarrow \bar Y$, and let 
$\alpha$ be a 
homomorphism in $\oper{Hom}(\oper{Gal}(K^{\oper{sep}}/K), G)$. Let $\rho$ be a $\kappa(Y)$-embedding of $\kappa(X)$ into 
$\kappa(\bar X)$. Then the mere cover model $X^{\alpha}\rightarrow Y$ is isomorphic
to the mere cover model associated with $\Psi(\Sigma_{\rho(\kappa(X))}(\alpha))$. In particular, since $\Psi\circ \Sigma_{\rho(\kappa(X))}$
is a bijection, every mere cover model is the twist of $X\rightarrow Y$.
\end{lem}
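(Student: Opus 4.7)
The plan is to invoke Proposition~\ref{E:correspondence} to reduce the lemma to showing that both mere covers correspond to the same open subgroup of $\pi_1(Y^*,\underline t_0)$ (up to conjugation). Set $F:=\rho(\kappa(X))$ and $F_\alpha:=\Psi(\Sigma_F(\alpha))$.

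The central tool is the natural surjection $\pi:\pi_1(Y^*,\underline t_0)\twoheadrightarrow \oper{Gal}(\kappa(\bar X)/\kappa(Y))$ coming from the fact that $\kappa(\bar X)=\kappa(X)\cdot K^{\oper{sep}}$ is the direct limit of $\kappa(X_L)$ over finite separable extensions $L/K$, each $X_L$ being a finite \'etale cover of $Y^*$. Because $X\to Y$ is a $G$-Galois model over $K$, Lemma~\ref{E:modelssections} guarantees that $w_F$ commutes with $G$, so the middle column of the diagram preceding the lemma genuinely identifies $\oper{Gal}(\kappa(\bar X)/\kappa(Y))$ with the direct product $G\times \oper{Gal}(K^{\oper{sep}}/K)$ via $\Phi\times w_F$. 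Under this identification I would show
$$\pi(\sigma)=(\beta(\sigma),\,r(\sigma)).$$
The second coordinate is forced by commutativity with the structure maps, and the first coordinate equals $\beta(\sigma)$ because reducing $\pi$ modulo the (now normal) subgroup $w_F(\oper{Gal}(K^{\oper{sep}}/K))$ recovers the surjection $\pi_1(Y^*,\underline t_0)\twoheadrightarrow \oper{Gal}(\kappa(X)/\kappa(Y))\cong G$, which is $\beta$ by definition.

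With this formula in hand the two subgroup computations are immediate. The image $\oper{img}(\Sigma_F(\alpha))=\{(\alpha(\tau),\tau):\tau\in\oper{Gal}(K^{\oper{sep}}/K)\}$ has preimage
$$H_\alpha:=\{\sigma\in\pi_1(Y^*,\underline t_0):\beta(\sigma)=\alpha(r(\sigma))\}$$
under $\pi$, and since $\oper{img}(\Sigma_F(\alpha))=\oper{Gal}(\kappa(\bar X)/F_\alpha)$ (one contains the other and both have index $|G|$), the standard dictionary tells us that $H_\alpha$ is precisely the subgroup of $\pi_1(Y^*,\underline t_0)$ that corresponds to the normalization of $Y$ in $F_\alpha$. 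On the other hand, the permutation $f_\sigma$ of Definition~\ref{E:defoftwist} satisfies $f_\sigma(1_G)=\beta(\sigma)\alpha(r(\sigma))^{-1}$, so the stabilizer of $1_G$ under the permutation representation defining $X^\alpha$ is also $H_\alpha$, and Proposition~\ref{E:correspondence} identifies $X^\alpha\to Y$ with the cover corresponding to $H_\alpha$. Both covers therefore correspond to $H_\alpha$ and are isomorphic as mere covers. The ``in particular'' statement is then immediate: $\Psi\circ\Sigma_F$ is a composition of bijections (Lemma~\ref{E:becky}), and by the discussion preceding Lemma~\ref{E:becky} every isomorphism class of mere cover model over $K$ is realized by some element of $\Omega$, so every mere cover model is isomorphic to $X^\alpha$ for some $\alpha$.

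The main obstacle I anticipate is rigorously constructing $\pi$ and verifying the formula $\pi(\sigma)=\Phi(\beta(\sigma))w_F(r(\sigma))$ in a way that navigates the inherent conjugation ambiguities (in $\beta$, in $w_F$, and in the choice of geometric point above $\underline t_0$); these ambiguities match the looseness built into Proposition~\ref{E:correspondence}, but one must be careful to keep them synchronized so that the first coordinate of $\pi(\sigma)$ is $\beta(\sigma)$ on the nose rather than an inner conjugate.
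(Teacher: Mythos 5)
Your proposal is correct and is essentially the paper's own argument: the paper likewise uses the surjection $\pi_1(Y\smallsetminus B,\underline t_0)\twoheadrightarrow\oper{Gal}(\kappa(\bar X)/\kappa(Y))$ (your $\pi$), identifies $X^{\alpha}\to Y$ with the subgroup $H_{\alpha}=\{\sigma\mid\beta(\sigma)=\alpha(r(\sigma))\}$ via Proposition \ref{E:correspondence}, and matches its image with $\oper{img}(\Sigma_{\rho(\kappa(X))}(\alpha))$ using the decomposition $\Phi\times w_{\rho(\kappa(X))}$. The conjugation issue you flag is handled in the paper exactly as you anticipate: it simply takes $\beta:=\beta'\circ g'$ (the representative induced by $\rho$), which is legitimate since by Remark \ref{E:credits} the twist is independent of this choice up to isomorphism of mere cover models.
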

\begin{proof}
Let $\underline t_1$ be a geometric point of $\bar X$ lying above $\underline t_0$, and let $R$ be the ramification divisor of 
$X\rightarrow Y$. Then we have the following commutative diagram:
$$\begindc{\commdiag}[330]

\obj(-1,0)[aa]{$1$}
\obj(2,0)[ab]{$G\cong\oper{Gal}(\kappa(\bar X)/\kappa(\bar Y))$}
\obj(6,0)[ac]{$\oper{Gal}(\kappa(\bar X)/\kappa(Y))$}
\obj(9,0)[ad]{$\oper{Gal}(K^{\oper{sep}}/K)$}
\obj(11,0)[ae]{$1$}
\mor{aa}{ab}{}
\mor{ab}{ac}{$i'$}
\mor{ac}{ad}{$r'$}
\mor{ad}{ae}{}
\obj(-1,1)[ba]{$1$}
\obj(2,1)[bb]{$\pi_1(\bar Y\smallsetminus \bar B,\underline t_0)$}
\obj(6,1)[bc]{$\pi_1(Y\smallsetminus B,\underline t_0)$}
\obj(9,1)[bd]{$\oper{Gal}(K^{\oper{sep}}/K)$}
\obj(11,1)[be]{$1$}
\mor{ba}{bb}{}
\mor{bb}{bc}{$i$}
\mor{bc}{bd}{$r$}
\mor{bd}{be}{}
\obj(2,2)[cb]{$\pi_1(\bar X\smallsetminus \bar R,\underline t_1)$}
\obj(6,2)[cc]{$\pi_1(\bar X\smallsetminus \bar R,\underline t_1)$}
\obj(2,3)[db]{$1$}
\obj(6,3)[dc]{$1$}
\obj(2,-1)[eb]{$1$}
\obj(6,-1)[ec]{$1$}
\mor{db}{cb}{}
\mor{cb}{bb}{$f$}
\mor{bb}{ab}{$g$}
\mor{ab}{eb}{}
\mor{dc}{cc}{}
\mor{cc}{bc}{$f'$}
\mor{bc}{ac}{$g'$}
\mor{ac}{ec}{}
\mor{cb}{cc}{}[\atright,\equalline]
\mor{ad}{bd}{}[\atright,\equalline]
\enddc$$

Let $\beta':\oper{Gal}(\kappa(\bar X)/\kappa(Y))\twoheadrightarrow G$ be defined by taking $\sigma$ to 
$\Phi^{-1}(\sigma|_{\rho(\kappa(X))})$, and let $\beta:\pi_1(Y\smallsetminus B,\underline t_0)\twoheadrightarrow G$ be the composition 
$\beta'\circ g'$.

By Definition \ref{E:defoftwist} and Proposition \ref{E:correspondence}, it 
follows that the 
mere cover model $X^{\alpha}\rightarrow Y$ corresponds to the subgroup of $\pi_1(Y\smallsetminus B,\underline{t}_0)$ defined by 
$$H_{\alpha}=\{\sigma \in \pi_1(Y\smallsetminus 
B,\underline{t}_0)|\alpha(r(\sigma))=\beta(\sigma)\}.$$

In other words, there exists a $\kappa(Y)$-embedding
$\rho^{\alpha}$ of $\kappa(X^{\alpha})$ into $\kappa(\bar X)$ such that \\$\oper{Gal}(\kappa(\bar X)/\rho^{\alpha}(\kappa(X^{\alpha})))$ is equal to $H_{\alpha}/(\pi_1(\bar X\smallsetminus \bar R,\underline t_1))$.
(The group $H_{\alpha}$ contains $\pi_1(\bar X\smallsetminus 
\bar R,\underline t_1)$ because $X^{\alpha}\rightarrow Y$ is a mere cover model.)

By the commutativity of the above diagram we have the 
presentation:$$H_{\alpha}/(\pi_1(\bar X\smallsetminus \bar R,\underline t_1))=\{\sigma\in 
\oper{Gal}(\kappa(\bar X)/\kappa(Y))| 
\alpha(r'(\sigma))=\beta'(\sigma)\}$$

Let $p_1$ and $p_2$ be the projection maps of $G\times \oper{Gal}(K^{\oper{sep}}/K)$ to $G$ and $\oper{Gal}(K^{\oper{sep}}/K)$ respectively. 
It is easy to see that 
$p_1\circ(\Phi\times w_{\rho(\kappa(X))})^{-1}=\beta'$ and $p_2\circ(\Phi\times w_{\rho(\kappa(X))})^{-1} =r'$. Therefore:
$$H_{\alpha}/(\pi_1(\bar X\smallsetminus \bar R,\underline t_1))=\{\sigma\in 
\oper{Gal}(\kappa(\bar X)/\kappa(Y))| 
\alpha(r'(\sigma))=\beta'(\sigma)\}$$ $$=\{\sigma\in \oper{Gal}(\kappa(\bar X)/\kappa(Y))| 
\alpha((p_2\circ(\Phi\times w_{\rho(\kappa(X))})^{-1})(\sigma))=(p_1\circ(\Phi\times 
w_{\rho(\kappa(X))})^{-1})(\sigma)\}$$ $$=(\Phi\times w_{\rho(\kappa(X))})(\{(h,\tau)\in G\times 
\oper{Gal}(K^{\oper{sep}}/K)|h=\alpha(\tau)\})=\oper{img}(\Sigma_{\rho(\kappa(X))}(\alpha))$$

Therefore $\rho^{\alpha}(\kappa(X^{\alpha}))=\Psi(\Sigma_{\rho(\kappa(X))}(\alpha))$, which implies that $X^{\alpha}\rightarrow Y$ is isomorphic to the mere
cover model induced by $\Psi(\Sigma_{\rho(\kappa(X))}(\alpha))$.

\end{proof}
\begin{rmk}\label{E:someall}
 \rm
  In the situation of Lemma \ref{E:twisted}, note that different choices of $G$-Galois models would yield different 
bijections between 
$\oper{Hom}(\oper{Gal}(K^{\oper{sep}}/K),G)$ and the set $\Omega$. This is because different 
$G$-Galois models 
yield different isomorphisms of $\oper{Gal}(\kappa(\bar X)/\kappa(Y))$ with $G\times \oper{Gal}(K^{\oper{sep}}/K)$.

Furthermore, as an immediate consequence of Lemma \ref{E:twisted} we see that every mere cover model is not only the twist of \it some 
\rm $G$-Galois model, but of \it every \rm $G$-Galois model.
\end{rmk}
\begin{lem}\label{E:galcenter}
 In the situation of Hypotheses \ref{E:hyp1}, let $X\rightarrow Y$ be a $G$-Galois model of $\bar X\rightarrow 
\bar Y$, and let 
$\alpha$ be a homomorphism $\oper{Gal}(K^{\oper{sep}}/K)\rightarrow G$. Then the twisted cover 
$X^{\alpha}\rightarrow Y$ is Galois if and only if the image of $\alpha$ is contained in $Z(G)$.
\end{lem}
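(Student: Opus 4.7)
The plan is to combine Lemma \ref{E:twisted} with the Galois criterion from Lemma \ref{E:modelssections}, which together reduce the question to a direct computation inside $\oper{Gal}(\kappa(\bar X)/\kappa(Y))$.

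First, fix a $\kappa(Y)$-embedding $\rho$ of $\kappa(X)$ into $\kappa(\bar X)$ and set $F = \rho(\kappa(X))$. By Lemma \ref{E:twisted}, the twisted cover $X^{\alpha} \rightarrow Y$ is isomorphic to the mere cover model associated with $\Psi(\Sigma_F(\alpha))$, i.e., the one whose corresponding section of $r'$ is $\Sigma_F(\alpha)$. By Lemma \ref{E:modelssections}, this mere cover model is Galois if and only if the image of $\Sigma_F(\alpha)$ commutes with $G$ inside $\oper{Gal}(\kappa(\bar X)/\kappa(Y))$.

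Second, I would observe that since $X \rightarrow Y$ is itself a $G$-Galois model, applying Lemma \ref{E:modelssections} to $X \rightarrow Y$ (with respect to the same embedding $\rho$) shows that the image of $w_F = \Psi^{-1}(F)$ commutes with $G$. Recall from the construction preceding Lemma \ref{E:twisted} that $\Sigma_F(\alpha)$ sends $\tau \mapsto \Phi(\alpha(\tau)) \cdot w_F(\tau)$.

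The main step is then an elementary computation: for $g \in G$ and $\tau \in \oper{Gal}(K^{\oper{sep}}/K)$,
\[
g \cdot \Phi(\alpha(\tau)) w_F(\tau) = \Phi(\alpha(\tau)) w_F(\tau) \cdot g
\]
holds if and only if, using that $w_F(\tau)$ commutes with $g$, we have $g \cdot \Phi(\alpha(\tau)) = \Phi(\alpha(\tau)) \cdot g$. Thus $\Sigma_F(\alpha)$ has image centralizing $G$ if and only if $\Phi(\alpha(\tau))$ lies in $Z(G)$ for every $\tau$, which (since $\Phi$ is an isomorphism of groups) is equivalent to $\oper{img}(\alpha) \subset Z(G)$. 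There is no real obstacle here: the content is entirely in setting up the two previous lemmas correctly so that the Galois condition becomes a commutation condition on just the $\Phi(\alpha(\tau))$ factor.
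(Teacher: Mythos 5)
Your proposal is correct and follows essentially the same route as the paper: reduce via Lemma \ref{E:twisted} and Lemma \ref{E:modelssections} to the condition that the image of $\Sigma_{\rho(\kappa(X))}(\alpha)$ centralizes $G$, then conclude that this forces $\oper{img}(\alpha)\subset Z(G)$. The only cosmetic difference is that the paper carries out the last step through the identification of $\oper{Gal}(\kappa(\bar X)/\kappa(Y))$ with the direct product $G\times \oper{Gal}(K^{\oper{sep}}/K)$, whereas you perform the equivalent elementwise computation using that the image of $w_F$ commutes with $G$ (which is the same fact, since $X\rightarrow Y$ is Galois).
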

\begin{proof}
 Let $\rho$ be a $\kappa(Y)$-embedding of $\kappa(X)$ into $\kappa(\bar X)$. By Lemma \ref{E:twisted}, 
 there exists a $\kappa(Y)$-embedding $\rho^{\alpha}$ of $\kappa(X^{\alpha})$ into $\kappa(\bar X)$ such that $\rho^{\alpha}(\kappa(X^{\alpha}))=\Psi(\Sigma_{\rho(\kappa(X))}(\alpha))$.
 
 By Lemma \ref{E:modelssections}, the mere cover model $X^{\alpha}\rightarrow Y$ is Galois if and only if the image of $\Sigma_{\rho(\kappa(X))}(\alpha)$
 commutes with $G$. Thus, the mere cover $X^{\alpha}\rightarrow Y$ is 
Galois if and only if for every $\tau\in\oper{Gal}(K^{\oper{sep}}/K)$ the element $(\alpha(\tau),\tau)\in 
G\times\oper{Gal}(K^{\oper{sep}}/K)$ commutes with  
$G\times 1$. Equivalently, if and only if the image of $\alpha$ is contained in $Z(G)$.
\end{proof}
\subsection{Proof of Theorems \ref{E:crux} and \ref{E:specialization}}
\begin{dff}\rm
In the situation of Hypotheses \ref{E:hyp1}, for every $G$-Galois model $X\rightarrow Y$  of $\bar X\rightarrow 
\bar Y$ 
over $K$, define an equivalence relation 
``$\equiv_{X/Y}$'' on $Y^*(K)$ by: $$P\equiv_{X/Y} Q \iff \phi^{X}_{P} \mbox{ and } \phi^{X}_{Q} \mbox{ are equal (up to 
conjugation by an element of $G$)}.$$
\end{dff}
\begin{lem}\label{E:yay}
 In the situation of Hypotheses \ref{E:hyp1}, let $X\rightarrow Y$ and $X'\rightarrow Y$ be two $G$-Galois models of 
$\bar X \rightarrow \bar Y$. Then the equivalence relations ``$\equiv_{X/Y}$'' and ``$\equiv_{X'/Y}$'' on 
$Y^*(K)$ are equal.
\end{lem}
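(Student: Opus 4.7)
The plan is to exploit the observation from the previous subsection that every mere cover model is a twist of any fixed $G$-Galois model, and to track the twist explicitly. By Lemma \ref{E:twisted}, the mere cover $X' \to Y$ is isomorphic to $X^{\alpha} \to Y$ for some homomorphism $\alpha \colon \oper{Gal}(K^{\oper{sep}}/K) \to G$; since $X'$ is Galois, Lemma \ref{E:galcenter} forces $\oper{img}(\alpha) \subseteq Z(G)$.

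Next, I would identify the epimorphism presenting $X^{\alpha}$ as a $G$-Galois model. Reading off Definition \ref{E:defoftwist}, the stabilizer of $e \in G$ under the induced $S_G$-action is the subgroup
$$\{\sigma \in \pi_1(Y^*, \underline t_0) : \beta^X(\sigma) = \alpha(r(\sigma))\},$$
and because $\alpha$ takes values in $Z(G)$, the map $\gamma(\sigma) := \beta^X(\sigma) \cdot \alpha(r(\sigma))^{-1}$ is a surjective homomorphism from $\pi_1(Y^*, \underline t_0)$ onto $G$ with precisely that subgroup as kernel. Hence $\beta^{X'}$ agrees with $\gamma$ up to $G$-conjugation. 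Composing with $s_P$ and using $r \circ s_P = \oper{id}$, one obtains the identity
$$\phi^{X'}_P(\tau) = \phi^{X}_P(\tau) \cdot \alpha(\tau)^{-1}$$
for every $\tau \in \oper{Gal}(K^{\oper{sep}}/K)$, interpreted modulo conjugation in $G$.

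Finally, because $\alpha(\tau) \in Z(G)$, conjugation by any $g \in G$ commutes with multiplication by $\alpha(\tau)^{-1}$, giving
$$g\phi^{X'}_Q(\tau)g^{-1} = g\phi^{X}_Q(\tau)g^{-1} \cdot \alpha(\tau)^{-1}.$$
Therefore the existence of a single $g \in G$ simultaneously conjugating $\phi^{X}_P$ into $\phi^{X}_Q$ is equivalent to the existence of a $g$ simultaneously conjugating $\phi^{X'}_P$ into $\phi^{X'}_Q$, and the two equivalence relations coincide. The main subtlety I anticipate is the identification of $\beta^{X'}$ with $\gamma$ up to $G$-conjugation: Lemma \ref{E:twisted} provides only a mere-cover isomorphism between $X'$ and $X^{\alpha}$, so one must argue that the $G$-Galois structure on $X^{\alpha}$ induced by $\gamma$ coincides, up to conjugation in $G$, with the given $G$-Galois structure on $X'$. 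This compatibility should follow from the fact that both $G$-Galois structures base change to the fixed $(\bar X \to \bar Y, \Phi)$, combined with the centrality of $\alpha$ ensuring that the twist preserves $\Phi$ fiberwise.
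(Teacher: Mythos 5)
Your proof is correct, but it takes a genuinely different route from the paper's. The paper never compares $\beta^{X}$ and $\beta^{X'}$ directly: it characterizes $P\equiv_{X/Y}Q$ intrinsically via the Twisting Lemma (Lemma \ref{E:twistinglemma}) as the existence of a $K$-rational point of the twisted cover $X^{\varphi_P^{X}}$ above $Q$, and then uses Lemma \ref{E:twisted} and Remark \ref{E:someall} to see that this same mere cover model is also a twist ${X'}^{\alpha}$ with $\alpha$ a lift of $\phi_P^{X'}$, so both equivalence relations are read off from the $K$-points of one and the same model. Your argument instead pins down how the two $G$-Galois models differ: $X'\cong X^{\alpha}$ as mere covers with $\oper{img}(\alpha)\subseteq Z(G)$ by Lemmas \ref{E:twisted} and \ref{E:galcenter}, whence $\phi_P^{X'}=\phi_P^{X}\cdot\alpha^{-1}$ modulo conjugation, uniformly in $P$; centrality of $\alpha$ then makes the conclusion immediate. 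This is sharper than the lemma itself (it says exactly how the specializations of any two $G$-Galois models differ, e.g. it relates $L_P^{X}$ and $L_P^{X'}$ explicitly), at the cost of the compatibility you flag, which the paper's softer argument sidesteps entirely. That compatibility does hold and can be completed in a few lines: $\gamma:=\beta^{X}\cdot(\alpha\circ r)^{-1}$ and $\beta^{X'}$ are epimorphisms of $\pi_1(Y^*,\underline t_0)$ onto $G$ with the same kernel (the covers are isomorphic as mere covers and both subgroups are normal), so $\beta^{X'}=\theta\circ\gamma$ for some $\theta\in\oper{Aut}(G)$; restricted to $\pi_1(\bar Y^*,\underline t_0)$, both $\gamma$ and $\beta^{X'}$ are conjugate to the epimorphism determined by $(\bar X\to\bar Y,\Phi)$ (this is what it means for $X$ and $X'$ to be $G$-Galois models of that cover, and $\alpha\circ r$ vanishes on the geometric fundamental group), and since that epimorphism is surjective, $\theta$ is forced to be inner. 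With that paragraph spelled out, your proof is complete.
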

\begin{proof}
 Let $P$ and $Q$ be two 
points in $Y^*(K)$, and let $\varphi_P^{X}$ be a lift of $\phi_P^{X}$.  By the Twisting Lemma (Lemma \ref{E:twistinglemma}), the 
condition $P\equiv_{X/Y} Q$ is equivalent 
to the existence of a $K$-rational point in $X^{\varphi_P^{X}}$ above $Q$. We will show that $P\equiv_{X'/Y}Q$ is also equivalent 
to this condition.

By Lemma 
\ref{E:twisted} and Remark \ref{E:someall}, the mere cover model $X^{\varphi_P^{X}}\rightarrow Y$ is a twist not only of $X\rightarrow Y$, but also 
of 
$X'\rightarrow Y$. In other words, there exists a homomorphism $\alpha:\oper{Gal}(K^{\oper{sep}}/K)\rightarrow G$, such that 
$X^{\varphi_P^{X}}\rightarrow Y$ is isomorphic to ${X'}^{\alpha}\rightarrow Y$ as mere covers. By Lemma 
\ref{E:twistinglemma}, we see that $X^{\varphi_P^{X}}\rightarrow Y$ (and therefore also ${X'}^{\alpha}\rightarrow Y$) has a 
$K$-rational point above $P$. Therefore, again by Lemma \ref{E:twistinglemma}, we see that $\alpha$ is a lift of $\phi^{X'}_P$. 
Therefore the condition $P\equiv_{X'/Y} Q$ is equivalent to $\alpha$ being a lift of $\phi_Q^{X'}$, which by Lemma 
\ref{E:twistinglemma} is equivalent to the existence of a $K$-rational point in ${X'}^{\alpha}$ above $Q$. Since 
$X^{\varphi_P^{X}}\rightarrow Y$ and ${X'}^{\alpha}\rightarrow Y$ are isomorphic as mere cover models, we proved that 
the condition $P\equiv_{X'/Y} Q$ is equivalent to the existence of a $K$-rational point in $X^{\varphi_P^{X}}$ above $Q$, as we 
wanted to prove.

\end{proof}
Since we proved that $\equiv_{X/Y}$ is independent of the $G$-Galois model, we may denote it simply by $\equiv$. We will now show 
that this equivalence relation satisfies Theorem \ref{E:specialization}.
\begin{proof} (of Theorem \ref{E:specialization})

\noindent\bf Proof of (1): \rm Let $X\rightarrow Y$ be a $G$-Galois model of $\bar X\rightarrow \bar Y$, and 
let $\tilde X\rightarrow Y$ be a mere cover model of it. Let 
$P$ be a point in $Y^*(K)$ such that its fiber in $\tilde X$ has a rational point. By Lemma \ref{E:twisted}, there exists a 
homomorphism 
$\alpha:\oper{Gal}(K^{\oper{sep}}/K)\rightarrow G$ such that $\tilde X\rightarrow Y$ is isomorphic to $X^{\alpha}\rightarrow Y$ as 
mere covers. 
Since $X^{\alpha}$ has a $K$-rational point over $P$, then by Lemma \ref{E:twistinglemma} it follows that $\alpha$ is a 
lift of $\phi_P^{X}$. Therefore, by definition, a point $Q$ in $Y^*(K)$ satisfies $P\equiv Q$ if and 
only if $\alpha$ is a lift of $\phi_Q^{X}$. Again by Lemma \ref{E:twistinglemma}, this implies that 
$P\equiv Q$ if and only if $\tilde X$ has a $K$-rational point over $Q$. Furthermore, by assertion (2) of Lemma \ref{E:twistinglemma}, 
the 
number of $K$-rational point in $\tilde X$ above 
any $Q\in 
[P]_{\equiv}$ is equal to $|C_G(\oper{img}(\alpha))|$, and therefore divides $|G|$ and is divisible by $|Z(G)|$.

\noindent\bf Proof of (2): \rm Let $X\rightarrow Y$ be a 
$G$-Galois 
model of $\bar X \rightarrow \bar Y$, and let $\varphi_P^{X}$ be a lift of $\phi_P^{X}$. By Lemma 
\ref{E:twistinglemma}, the 
twisted cover $X^{\varphi_P^{X}}\rightarrow Y$ has a $K$-rational point above $P$. Therefore, by assertion (1) of Theorem 
\ref{E:specialization}, the twisted cover $X^{\varphi_P^{X}}\rightarrow Y$ satisfies that its $K$-rational points lie exactly above 
$[P]_{\equiv}$. This proves existence.

By Lemma \ref{E:twisted} all mere covers are isomorphic to twists of $X\rightarrow Y$ by homomorphisms from 
$\oper{Gal}(K^{\oper{sep}}/K)$ to 
$G$. For such a homomorphism $\alpha$, Lemma \ref{E:twistinglemma} implies that $X^{\alpha}$ has a $K$-rational point above $P$ (and 
therefore, by (1), exactly above the equivalence class $[P]_{\equiv}$) if and only if $\alpha$ is conjugate to $\varphi_P^{X}$ in $G$.
In other words, if and only if $\alpha$ and $\varphi_P^{X}$ induce isomorphic mere cover models.

The number of Galois conjugates of $X^{\varphi_P^{X}}\rightarrow Y$ is equal to the number of different homomorphisms from $\oper{Gal}(K^{\oper{sep}}/K)$ to $G$ that are conjugate to $\varphi_P^{X_K}$ in $G$,
namely to  $(G:C_G(\oper{img}(\varphi_P^{X_K})))$.  Therefore, since $d=|C_G(\oper{img}(\alpha))|$, we are done

\noindent\bf Proof of (3): \rm Follows from the fact that $\equiv$ is well defined (Lemma \ref{E:yay}).
\end{proof}
We can now prove Theorem \ref{E:crux}:
\begin{proof} (of Theorem \ref{E:crux})

Let $X\rightarrow Y$ be a $G$-Galois model of $\bar X\rightarrow \bar Y$. By Lemma \ref{E:twisted}, there 
exists a 
homomorphism 
$\alpha:\oper{Gal}(K^{\oper{sep}}/K)\rightarrow G$ such that $\tilde X\rightarrow Y$ is isomorphic to $X^{\alpha}\rightarrow Y$ as 
mere cover 
models. Furthermore, it 
is 
easy to see from Lemma \ref{E:twisted} that for every overfield $L/K$, the base change of the twisted cover $X^{\alpha}\rightarrow 
Y$ to $L$ is just the twist of $X\rightarrow Y$ by the restriction of $\alpha$ to $\oper{Gal}(K^{\oper{sep}}/L)$. Therefore, Lemma 
\ref{E:galcenter} implies that the field $(K^{\oper{sep}})^{\alpha^{-1}(Z(G))}$ is the unique minimal field where $X^{\alpha}\rightarrow 
Y$ 
becomes 
Galois.

Since $\tilde X$ has a $K$-rational point above $P$, by Lemma \ref{E:twistinglemma} the homomorphism $\alpha$ is a lift of 
$\phi_P^{X}$. 
Therefore $(K^{\oper{sep}})^{\alpha^{-1}(Z(G))}$ is contained in $(K^{\oper{sep}})^{\ker \phi_P^{X}}=L_P^{X}$. Since this holds for 
every $G$-Galois 
model 
$X\rightarrow Y$, it follows that $(K^{\oper{sep}})^{\alpha^{-1}(Z(G))}$ is contain in $L_P$. Therefore $\tilde X\rightarrow Y$ 
becomes 
Galois when base changed to $L_P$.

Fix a $G$-Galois model $X\rightarrow Y$ of $\bar X\rightarrow \bar Y$. Let 
$\mathbf{1}:\oper{Gal}(L_P^{X})\rightarrow G$ denote the 
map that sends all of 
$\oper{Gal}(L_P^{X})$ to $1$. Since the restriction of $\alpha$ to $L_P^{X}$ equals $\mathbf{1}$, the base change of 
$X^{\alpha}\rightarrow 
Y$ to $L_P^{X}$ is the twist $X^{\mathbf{1}}_{L_P^{X}}\rightarrow Y_{L_P^{X}}$ of $X_{L_P^{X}}\rightarrow Y_{L_P^{X}}$ by 
$\mathbf{1}$. This mere cover is clearly isomorphic to 
$X_{L_P^{X}}\rightarrow Y_{L_P^{X}}$ as a mere cover.
\end{proof}
\subsection{Appendix - Adjoining Roots of Unity to a Field of Moduli to get a Field of Definition}\label{E:adjoin}
Theorem \ref{E:tech} in Part I describes a general relationship between the field of
moduli and fields of definition. In this section we observe (Proposition
\ref{E:abelian}) the existence of a particular field of definition (infinite over the field of moduli)
with special properties. This allows us to prove a purely field-theoretic result (Corollary \ref{E:coolerigp}) 
towards the Inverse Galois Problem.

Let $G$ be a finite group, and let $\bar X\rightarrow \bar Y$ be a $G$-Galois branched cover of varieties over $\bar \q$.
As noted in Remark \ref{E:explanation}, its field of moduli $M$ as a $G$-Galois branched cover may not be a field of definition as a $G$-Galois branched
cover. However, Coombes
and Harbater
(\cite{ch})
proved that the field $\cup_nM(\zeta_n)$ resulting from adjoining all of the roots of unity to $M$ \it is \rm a field of definition.
(Here $\zeta_n$ is defined to be $e^{\frac{2\pi i}{n}}$.) The following is a
strengthening of this result.
\begin{prp} \label{E:abelian}
In the situation above, the field $\cup_{\{n|\exists
m:\,n|\,|Z(G)|^m\}}M(\zeta_n)$ is a field of definition. In particular, there
exists a field of definition (finite over $\q$) that is ramified over the field of moduli $M$ only
over the primes that divide $|Z(G)|$.
\end{prp}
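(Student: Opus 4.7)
The plan is to leverage the Coombes--Harbater theorem, which gives that $\bigcup_{n}M(\zeta_{n})$ is already a field of definition, and to refine it by quantifying a descent obstruction that lives in cohomology with coefficients in $Z(G)$, using the twisting machinery set up in Part II (in particular Lemma \ref{E:twisted} and Lemma \ref{E:galcenter}). First I would pick a $G$-Galois model $X\rightarrow Y$ of $\bar X\rightarrow \bar Y$ over some finite cyclotomic extension $M(\zeta_{N})$ provided by Coombes--Harbater. Since $M$ is the field of moduli as a $G$-Galois branched cover, for each $\sigma\in\oper{Gal}(M(\zeta_{N})/M)$ one can choose a $G$-Galois isomorphism $\psi_{\sigma}\colon X^{\sigma}\rightarrow X$; because $\oper{Aut}_{G}(X)=Z(G)$, the classical formula $c(\sigma,\tau)=\psi_{\sigma}\circ\sigma(\psi_{\tau})\circ\psi_{\sigma\tau}^{-1}$ defines a $2$-cocycle with values in $Z(G)$, whose class $[c]\in H^{2}(\oper{Gal}(M(\zeta_{N})/M),Z(G))$ is precisely the obstruction to descending $X$ to $M$ as a $G$-Galois model.

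By Lemma \ref{E:twisted} together with Lemma \ref{E:galcenter}, the set of $G$-Galois models of $\bar X\rightarrow \bar Y$ over $M(\zeta_{N})$ is a torsor under $\oper{Hom}(\oper{Gal}(K^{\oper{sep}}/M(\zeta_{N})),Z(G))$ (up to the natural equivalence), and running over all choices of model changes $[c]$ only by coboundaries coming from the Hochschild--Serre spectral sequence attached to $1\rightarrow \oper{Gal}(K^{\oper{sep}}/M(\zeta_{N}))\rightarrow \oper{Gal}(K^{\oper{sep}}/M)\rightarrow \oper{Gal}(M(\zeta_{N})/M)\rightarrow 1$ with coefficients in $Z(G)$. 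The heart of the argument would then be to show that a suitable twist kills $[c]$ after inflation to $H^{2}(\oper{Gal}(M^{+\oper{cyc}}/M),Z(G))$, where $M^{+\oper{cyc}}:=\bigcup_{\{n:\exists m,\, n\mid |Z(G)|^{m}\}}M(\zeta_{n})$. Since $Z(G)$ is a finite abelian group of exponent dividing $|Z(G)|$, over a field containing $\zeta_{|Z(G)|^{m}}$ Kummer theory identifies $Z(G)$ with a direct summand of a sum of $\mu_{|Z(G)|^{m}}$'s as Galois modules, and this should yield an explicit trivialization of $c$ using only $|Z(G)|^{m}$-th roots of unity. Once $[c]$ is trivialized, some twist of $X$ descends to $M^{+\oper{cyc}}$; being of finite type, that twist is already defined over some finite subfield $M(\zeta_{n})$ with $n\mid |Z(G)|^{m}$, which is ramified over $M$ only at primes dividing $|Z(G)|$, yielding the second assertion of the proposition.

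The hard part will be the third step, the explicit trivialization: one must argue that a class in $H^{2}(\oper{Gal}(M(\zeta_{N})/M),Z(G))$ of the particular shape arising from the field-of-moduli cocycle can be killed purely by adjoining roots of unity of $|Z(G)|$-smooth order, rather than by adjoining extraneous radicals that would ramify outside the primes dividing $|Z(G)|$. This is precisely what distinguishes the statement from the weaker Coombes--Harbater result, and it will require tracking carefully how the cyclotomic character interacts with the Galois structure of $Z(G)$ and exploiting that the cocycle takes values in the $|Z(G)|$-torsion subgroup.
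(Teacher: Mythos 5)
You have correctly located the obstruction in $H^2$ with coefficients in $Z(G)$, but the step you yourself flag as ``the hard part'' is exactly the content of the proposition, and the mechanism you propose for it would not work. Once enough roots of unity are adjoined, $Z(G)$ does become a direct sum of constant modules $\mathbb{Z}/n_i$ as a Galois module, but this buys you nothing at any \emph{finite} cyclotomic level: for a number field $L$ containing $\mu_n$ one has $H^2(\operatorname{Gal}(\overline{\mathbb{Q}}/L),\mathbb{Z}/n)\cong\operatorname{Br}(L)[n]$, which is never zero, so ``Kummer theory plus tracking the cyclotomic character'' cannot trivialize the class over some $M(\zeta_{|Z(G)|^m})$. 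The vanishing is an infinite-level phenomenon. The paper's proof is much shorter and goes through two black boxes: by \cite{descent} (D\`ebes--Douai), for any overfield $L$ of the field of moduli the obstruction to $L$ being a field of definition as a $G$-Galois cover lies in $H^2(\operatorname{Gal}(\overline{\mathbb{Q}}/L),Z(G))$; and by Proposition 9 of Chapter II of \cite{serregalois}, the field $L=\bigcup_{\{n\mid\exists m:\,n\mid |Z(G)|^m\}}M(\zeta_n)$ has $\operatorname{cd}_p(L)\le 1$ for every prime $p$ dividing $|Z(G)|$, so that $H^2(\operatorname{Gal}(\overline{\mathbb{Q}}/L),Z(G))=0$ and the obstruction dies. (The centerless case is treated separately, since then the cover is already defined over $M$ by \cite{ch}.) Your final reduction --- a finite-type object defined over the union is defined over some finite subextension $M(\zeta_n)$ with $n$ only divisible by primes dividing $|Z(G)|$, giving the ramification statement --- is fine and is indeed how the ``in particular'' clause follows.

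There is also a structural misstep in how you set up the obstruction. You form the descent cocycle for descending a Coombes--Harbater model from $M(\zeta_N)$ down to $M$ and then speak of killing it ``after inflation to $H^2(\operatorname{Gal}(M^{+\operatorname{cyc}}/M),Z(G))$''. Descent to $M$ is in general impossible (that is the whole point of field of moduli versus field of definition), and it is not what the proposition asks for; moreover $\operatorname{Gal}(M^{+\operatorname{cyc}}/M)$ is not a quotient through which your class factors, so the inflation you invoke is not the relevant map. What is needed is the image of the obstruction under restriction to the absolute Galois group of the big cyclotomic field, i.e.\ the obstruction class attached to $M^{+\operatorname{cyc}}$ itself, and that is precisely the class the cohomological-dimension argument annihilates. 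As it stands, your write-up reduces the proposition to an unproved claim whose natural finite-level formulation is false, so the argument has a genuine gap rather than a repairable imprecision.
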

\begin{proof}
If $G$ is centerless, then the cover is defined over its field of moduli
(\cite{ch}) and therefore the theorem follows. Otherwise $\cup_{\{n|\exists
m:\,n|\,|Z(G)|^m\}}M(\zeta_n)$ satisfies the
hypotheses of Proposition 9 in Chapter II of \cite{serregalois}. We conclude
that $\oper{cd}_p(\cup_{\{n|\exists
m:\,n|\,|Z(G)|^m\}}M(\zeta_n))\leq 1$ for every prime $p$ that divides
$|Z(G)|$. This implies that $H^2(\cup_{\{n|\exists
m:\,n|\,|Z(G)|^m\}}M(\zeta_n),Z(G))$ is trivial. As the obstruction for this
field to be a field of definition lies in this group (\cite{descent}), we are
done.
\end{proof}
We get the following corollary:

\begin{crl} \label{E:coolerigp} 
There is an extension of number
fields $\q\subset E\subset F$ such that $F/E$ is $G$-Galois, and $E/\q$
ramifies only over those primes that divide $|G|$.
\end{crl}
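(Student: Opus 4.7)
The plan is to package Proposition \ref{E:abelian} with a good-reduction theorem of Beckmann together with Hilbert's Irreducibility Theorem. First, by Riemann's Existence Theorem, for any finite group $G$ one produces a $G$-Galois branched cover $\bar X \to \p_{\bar\q}$ whose branch locus is a finite subset of $\p(\q)$. Concretely, choose generators $g_1,\ldots,g_{r-1}$ of $G$, set $g_r=(g_1\cdots g_{r-1})^{-1}$, and invoke the bijection between such generating tuples with product $1$ (up to uniform conjugation) and $G$-Galois covers of $\p_{\bar\q}$ branched at a prescribed set of points.

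Next, by Beckmann's theorem on good reduction of branched covers, the field of moduli $M$ of $\bar X \to \p_{\bar\q}$ as a $G$-Galois branched cover is unramified over $\q$ at every prime not dividing $|G|$; the good-reduction hypothesis on the branch locus is automatic here, since distinct $\q$-rational points of $\p$ never collide under reduction. Applying Proposition \ref{E:abelian} then furnishes a field of definition $E$, finite over $\q$, with $E/M$ ramified only over primes of $M$ dividing $|Z(G)|$. Since $|Z(G)|$ divides $|G|$, we conclude that $E/\q$ is unramified outside primes dividing $|G|$.

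To produce $F$, let $X\to \p_E$ be a $G$-Galois model of $\bar X\to \p_{\bar\q}$ over $E$. Then $\kappa(X)/E(t)$ is a $G$-Galois extension of function fields. Since $E$ is a number field and hence Hilbertian, Hilbert's Irreducibility Theorem supplies a rational specialization $t_0\in E$ whose fiber defines an irreducible $G$-Galois extension $F/E$ of number fields. This yields the desired tower $\q\subset E\subset F$ with the required ramification property.

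The only genuinely non-trivial input beyond results already in the paper is Beckmann's good-reduction theorem; indeed, that is the main obstacle, as it is the step responsible for pinning down the ramification of $M/\q$. The rest is a direct synthesis of Riemann's Existence Theorem, Proposition \ref{E:abelian}, and HIT, which together bridge the gap from a cover with controlled field of moduli to an honest extension of number fields.
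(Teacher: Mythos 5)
Your overall strategy is the paper's own (Riemann's Existence Theorem, then Beckmann's good-reduction theorem to control the ramification of the field of moduli $M/\q$, then Proposition \ref{E:abelian}, then Hilbert's Irreducibility Theorem), but the step you dismiss as ``automatic'' is exactly where the content lies, and your justification for it is false. Distinct $\q$-rational points of $\p$ certainly can collide under reduction: $0$ and $\ell$ meet modulo $\ell$. Worse, the collision cannot in general be avoided by a clever choice: once the cover requires $r\geq 4$ branch points (which happens whenever $G$ is not $2$-generated), any four distinct points of $\p(\q)$ coalesce somewhere. Indeed, normalizing two of them to $0$ and $\infty$ by an element of $\oper{GL}_2(\z)$ (possible when their primitive coordinate vectors have determinant $\pm 1$), the requirement that the remaining points meet neither $0$ nor $\infty$ at any prime forces them to be $1$ and $-1$, which coalesce modulo $2$. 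Since Beckmann's theorem only bounds the ramification of $M/\q$ at primes $\ell\nmid |G|$ at which the branch divisor has good reduction (no coalescence mod $\ell$), your argument leaves $M/\q$ possibly ramified at primes unrelated to $|G|$, and the conclusion about $E/\q$ collapses.

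The paper repairs precisely this point by not insisting on rational branch points: it takes the branch locus to be the set $T$ of $(p^n)^{\oper{th}}$ roots of unity, where $p$ is a prime dividing $|G|$ and $p^n\geq r$, $r$ the minimal number of generators of $G$. This divisor is defined over $\q$ (Beckmann needs only a Galois-stable branch divisor, not rational points), and differences of distinct $(p^n)^{\oper{th}}$ roots of unity have norm a power of $p$, so the horizontal divisors in $\p_{\z}$ coalesce only over $p$, which divides $|G|$. With that choice, Beckmann gives $M/\q$ unramified outside primes dividing $|G|$, and the remainder of your argument (Proposition \ref{E:abelian} to get $E$ finite over $\q$ with the stated ramification, then HIT over the Hilbertian field $E$ to produce $F$) goes through exactly as you wrote it. So the missing idea is the deliberate construction of a Galois-stable branch divisor whose only coalescence lies over primes dividing $|G|$.
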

\begin{proof}
Let $r$ be the minimal number of generators of $G$. Let $p$ be a prime that divides $|G|$, and let $n$ be a natural number such that $r\leq p^n$. 
Let $T$ be the set of $(p^n)^{\oper{th}}$ roots of unity. Since $r\leq |T|$, a standard argument (see \cite{harbmsri}) using Riemann's Existence Theorem  shows that there exists a $G$-Galois branched cover
$\bar X\rightarrow \p_{\bar \q}$,
ramified at most over $T$. Let $M$ be its field of moduli as a $G$-Galois branched cover.

It is easy to check that the points of $T$, viewed as horizontal divisors in $\p_{\z}$,
coalesce only over the prime $p$. Therefore, by \cite{syb1}, the field extension $M/\q$ ramifies only over primes that divide $|G|$. By
Proposition \ref{E:abelian}, there exists a field of definition $E/M$ of $\bar X\rightarrow \p_{\bar \q}$ as a $G$-Galois branched cover
such that $E/M$ ramifies only over primes that divide $|G|$. Therefore $E/\q$ ramifies only over primes that divide $|G|$. The corollary
now follows by applying Hilbert's Irreducibility Theorem.
\end{proof}
\newpage
\bibliography{specializations}{}
\bibliographystyle{amsalpha}
\medskip
\noindent Current author information:\\
Hilaf Hasson: Department of Mathematics, Stanford University, Palo Alto, CA 94305, USA\\
email: {\tt hilaf@stanford.edu}
\end{document}